\newtheorem{theorem}{Theorem}[section]
\newtheorem{lemma}[theorem]{Lemma} 
\numberwithin{equation}{section}
\DeclareMathOperator*{\ii}{\mathcal{{ \rm i }}}
\newcommand{\sbinom}[2]{{\textstyle{\binom{#1}{#2}}}}
\newcommand{\satop}[2]{\stackrel{\scriptstyle{#1}}{\scriptstyle{#2}}}
\newcommand{\bsUpsilon}{{\boldsymbol{\Upsilon}}}
\newcommand{\bsalpha}{{\boldsymbol{\alpha}}}
\newcommand{\bstau}{{\boldsymbol{\tau}}}
\newcommand{\bsgamma}{{\boldsymbol{\gamma}}}
\newcommand{\bsDelta}{{\boldsymbol{\Delta}}}
\newcommand{\bsnu}{{\boldsymbol{\nu}}}
\newcommand{\bszero}{{\boldsymbol{0}}}
\newcommand{\bshalf}{{\boldsymbol{\tfrac{1}{2}}}}
\newcommand{\bsb}{{\boldsymbol{b}}}
\newcommand{\bse}{{\boldsymbol{e}}}
\newcommand{\bsm}{{\boldsymbol{m}}}
\newcommand{\bst}{{\boldsymbol{t}}}
\newcommand{\bsx}{{\boldsymbol{x}}}
\newcommand{\bsxt}{\widetilde{{\boldsymbol{x}}}}
\newcommand{\bsy}{{\boldsymbol{y}}}
\newcommand{\bsz}{{\boldsymbol{z}}}
\newcommand{\bseta}{{\boldsymbol{\eta}}}
\newcommand{\bfn}{{\vec{\mathbf{n}}}}
\newcommand{\rd}{{\mathrm{d}}}
\newcommand{\bbA}{{\mathbb{A}}}
\newcommand{\bbS}{{\mathbb{S}}}
\newcommand{\bbR}{{\mathbb{R}}}
\newcommand{\bbZ}{{\mathbb{Z}}}
\newcommand{\bbN}{{\mathbb{N}}}
\newcommand{\bbE}{{\mathbb{E}}}
\newcommand{\bbP}{{\mathbb{P}}}
\newcommand{\calI}{{\mathcal{I}}}
\newcommand{\calO}{{\mathcal{O}}}
\newcommand{\calT}{{\mathcal{T}}}
\newcommand{\setu}{{\mathrm{\mathfrak{u}}}}
\newcommand{\setv}{{\mathrm{\mathfrak{v}}}}
\newcommand{\indx}{{\mathfrak F}}
\newcommand{\supp}{{\mathrm{supp}}}
\newcommand{\mask}[1]{{}}
\newcommand{\bsys}{\bsy_{\{1:s\}}}
\newcommand{\La}{\mathcal{L}}
\newcommand{\Lad}{\mathcal{B}}
\newcommand{\Lads}{\mathcal{B}_s}
\newcommand{\Lady}{\mathcal{B}(\bsy)}
\newcommand{\Ladys}{\mathcal{B}_s(\bsys)}
\newcommand{\Ladinvy}{\left[\mathcal{B}(\bsy)\right]^{-1}}
\newcommand{\Laj}{\mathcal{T}_{j}}
\newcommand{\Lazs}{\mathcal{A}_{0}}
\newcommand{\Lajs}{\mathcal{A}_{j}}
\newcommand{\M}{\mathcal{M}}
\newcommand{\bilin}{\mathscr{B}}
\newcommand{\antilin}{\mathscr{G}}
\newcommand{\bilins}{\mathcal{S}}
\newcommand{\antilins}{\mathcal{\ell}}
\definecolor{darkred}{RGB}{139,0,0}
\definecolor{darkgreen}{RGB}{0,100,0}
\definecolor{darkmagenta}{RGB}{180,0,180}
\definecolor{darkblue}{RGB}{0,0,190}
\title{Quasi-Monte Carlo finite element analysis for wave propagation in heterogeneous random media}
\author{M. Ganesh\footnotemark[2],\; Frances Y.~Kuo\footnotemark[3]\;\; and\; Ian H.~Sloan\footnotemark[3]}
\renewcommand{\thefootnote}{\fnsymbol{footnote}}
\date{January 2021}
\begin{document}

\footnotetext[2]{Department of Applied Mathematics \& Statistics, Colorado School of Mines, Golden, Colorado
80401, USA ({\tt mganesh@mines.edu}).}

\footnotetext[3]{School of Mathematics and Statistics, University of New
South Wales, Sydney NSW 2052, Australia ({\tt f.kuo@unsw.edu.au}, {\tt
i.sloan@unsw.edu.au}).}

\renewcommand{\thefootnote}{\arabic{footnote}}

\maketitle

\begin{abstract}
We propose and analyze a quasi-Monte Carlo (QMC) algorithm for efficient
simulation of wave propagation modeled by the Helmholtz equation in a
bounded region in which the refractive index is random and spatially
heterogenous. Our focus is on the case in which the region can contain
multiple wavelengths. We bypass the usual sign-indefiniteness of the
Helmholtz problem by switching to an alternative sign-definite formulation
recently developed by Ganesh and Morgenstern (Numerical Algorithms,
{\bf{83}}, 1441--1487, 2020). The price to pay is that the regularity
analysis required for QMC methods becomes much more technical.
Nevertheless we obtain a complete analysis with error comprising
stochastic dimension truncation error, finite element error and cubature
error, with results comparable to those obtained for the diffusion
problem. \\

\noindent\textbf{Keywords}: quasi-Monte Carlo method, finite element
method, wave propagation, heterogeneous, random media, Helmholtz equation,
coercive

\noindent\textbf{AMS Subject Classification (2010)}: 35J05, 35R60, 65D30,
65D32, 65N30
\end{abstract}


\section{Introduction}\label{sec:intro}

This paper is concerned with a new algorithm and associated numerical
analysis for efficient simulation of wave propagation modeled by the
Helmholtz equation in a bounded region in which the refractive index is
random and spatially heterogenous. The wave is induced by an impinging
incident wave, and our focus is on the case in which the region can
contain multiple wavelengths. The main aim of this article is to compute
the expected value of a linear functional of the resulting wave field by
the use of a well designed \emph{Quasi-Monte Carlo} (\emph{QMC}) method
\cite{DKS13,DP10,Nie92,SJ94}, and to bound the resulting error.

The design and analysis of QMC methods has been well studied for the
classical diffusion problem, see for example
\cite{GGKSS19,GKNSSS15,GKNSS18b,HPS17,KN16,KSS12}. However, it is well
known that the standard Galerkin variational formulation for the Helmholtz
partial differential equation (PDE) lacks positive definiteness unless the
wavelength is relatively large compared to the region. The resulting lack
of coercivity (or sign-definiteness) rules out the standard QMC analysis
that has recently been used successfully for strongly elliptic diffusion
problems with random input. The analysis of QMC methods has also been
extended to a general class of operator equations, see \cite{Sch13} and
subsequent papers, e.g., \cite{DKLNS14,DLS16,Gan18,GHS18}. These papers
include the case of the Helmholtz equation, under an appropriate inf-sup
condition on the standard Galerkin variational formulation and assumptions
on the wavelength and the random component of the refractive index.

In this paper we bypass the sign-indefiniteness problem in a different
way, by using the recently developed sign-definite deterministic
formulation of Ganesh and Morgenstern~\cite{GanMor20}. In the present work
that analysis is extended to include randomness in the heterogeneous
refractive index. In the resulting QMC analysis there is a price to pay
for using the sign-definite formulation of \cite{GanMor20}, in that the
analysis of regularity with respect to the stochastic variables becomes
complicated, and a new approach is needed. On the other hand it has the
advantage that the space discretization can be carried out with the
standard Galerkin scheme without any threat of instability.

Precisely, we study the wave propagation problem in a bounded domain $D
\subset \mathbb{R}^d$, for $d=2,3$ with Lipschitz boundary $\partial D$.
The incident wave is of wavelength $\lambda=2\pi/k$, where $k$ is the
positive wavenumber, and our interest extends to wavelengths $\lambda$
smaller than $L$, where $L$ is a characteristic length of $D$, or
equivalently to $kL>2\pi$. The square of the refractive index,
$n(\bsx,\omega)$, in the interior of $D$ may be spatially varying, and is
also random, as described below.

For a deterministic forcing function $f \in L^2(D)$ and boundary data
$g\in L^2(\partial D)$, and for almost all elementary events $\omega$ in
the probability space $(\Omega,\mathcal{A},\bbP)$, the unknown field
$u(\cdot, \omega) \in H^1(D)$ is assumed to satisfy the Helmholtz PDE and
an absorbing boundary condition
\begin{equation}\label{eq:pde_omega}
 (\La u)(\bsx, \omega) = -f(\bsx)\,, \quad \bsx \in  D\,,
 \qquad  \text{and} \qquad
 \frac{\partial u}{\partial \bfn}(\widetilde{\bsx}, \omega)-\ii\, k\,u(\widetilde{\bsx} , \omega)
 = g(\widetilde{\bsx} )\,, \quad   \widetilde{\bsx} \in \partial D\,,
\end{equation}
where the \emph{stochastic Helmholtz operator} is given by
\begin{equation}\label{eq:pdeoper_omega}
 (\La u)(\bsx, \omega) \,:=\, \Delta u(\bsx, \omega) + k^2\,n(\bsx, \omega)\,u(\bsx, \omega)\,,
 \qquad \bsx \in  D\,, \quad \omega\in (\Omega,\mathcal{A},\bbP)\,.
\end{equation}
Here $\bfn = \bfn(\widetilde{\bsx})$ is the outward-pointing unit normal
vector, defined almost everywhere on the surface $\partial D$ of the
Lipschitz domain $D$. The system \eqref{eq:pde_omega} is a well known
model for a wide class of applications, including acoustic,
electromagnetic, and seismic wave propagation in heterogeneous
media~\cite{colton:inverse, Ihlenburg:1998,nedlec:book}. The boundary
condition in~\eqref{eq:pde_omega} is standard for the interior wave
propagation model and, as described in~\cite{GanMor20} and references
therein, it can be either considered as an approximation of the Sommerfeld
radiation condition  occurring in the unbounded medium counterpart of our
model, or can be used as an interface condition in the
heterogeneous-homogeneous coupled wave propagation
model~\cite{dgs2020,GanMor16}.

The random coefficient $n(\bsx,\omega)$, $\bsx \in D$, is taken in this
article to be parameterized by an infinite-dimensional vector
$\bsy(\omega) = (y_1(\omega),y_2(\omega),\ldots)$. For a fixed realization
$\omega \in \Omega$, we denote the corresponding deterministic parametric
coefficient by $n(\bsx,\bsy)$ and the associated solution to the above PDE
model by $u(\bsx,\bsy)$. We assume that the parameter $\bsy$ is uniformly
distributed on
\begin{align*}
  U \,:=\, [-\tfrac{1}{2},\tfrac{1}{2}]^\bbN\,,
\end{align*}
with the uniform probability measure $\mu(\rd\bsy) = \bigotimes_{j\geq 1}
\rd y_j = \rd\bsy$, where   $\bbN$ is the set of positive integers.

The non-negative, uncertain, coefficient $n(\bsx,\bsy)$ is assumed to be
expressible as a mean field $n_0(\bsx)$  plus a perturbation,
\begin{align} \label{eq:axy-unif}
  n(\bsx,\bsy)
  \,=\, n_0(\bsx) + \sum_{j\geq 1} y_j\, \psi_j(\bsx)\,,
  \qquad
  \bsx\in D\,, \quad\bsy\in U\,,
\end{align}
where the functions $\psi_j(\bsx)$ are given.  For example, the functions
$\psi_j$ may belong to the Karhunen-Lo\`eve eigensystem of a covariance
operator, or other suitable function systems in $L^2(D)$. We note that $n$
and $n_0$ represent the square of the non-zero physical refractive index,
and hence $n$ and $n_0$ are positive.

The paper~\cite{FLL15} studied a different computational scheme for wave
propagation in random media, using the standard sign-indefinite
formulation of the Helmholtz equation. In that paper the squared
refractive index $n$ was taken to be of the form
\begin{equation}\label{eq:FL_n}
 n(\bsx,\omega) 
 = [ 1 + \epsilon\,\eta(\bsx,\omega)]^2,
 \qquad \bsx \in D, \quad \omega \in \Omega,
 \end{equation}
with $\epsilon$ a small perturbation parameter controlling the magnitude
of the random fluctuation, $\eta$ being a random process satisfying the
constraint that $\mathbb{P} \left\{\omega \in \Omega:
\|\eta(\cdot,\omega)\|_{L^\infty(D)} \leq 1 \right\} = 1$, and $\Omega$
being a sample space. With $D$ being star-shaped with respect to the
origin, the authors established in~\cite[Theorem 2.15]{FLL15}
well-posedness of the continuous stochastic Helmholtz model, under the
restriction that the parameter $\epsilon$ is of the order $1/(kL)$. A
discrete form of the stochastic Helmholtz model was then developed
in~\cite{FLL15} by writing the stochastic solution as a series in powers
of $\epsilon^j, j = 0, 1, 2,\ldots$. The coefficients in the series
expansion were approximated in~\cite{FLL15} using the interior-penalty
discontinuous Galerkin (IPDG) discretization method in space, and Monte
Carlo (MC) cubature in the stochastic variables, the IPDG method being
chosen because of its unconditional stability. Because of the low-order
convergence of MC approximations, the approach in~\cite{FLL15} requires
substantially more sampling points in $\Omega$ (and hence more Helmholtz
system solves) compared to the higher-order QMC method and finite element
method (FEM) Galerkin scheme used in the present paper.

The general operator-theory approach in~\cite {DKLNS14} and the expansion
in powers of $\epsilon$ approach in~\cite{FLL15} both require that,
roughly speaking,
\[
kL\times  \; \mbox{(some norm of the stochastic variables) be not large}.
\]
(For the case of~\cite {DKLNS14} see Appendix \ref{append:pert}.)  We
shall see in \eqref{eq:cond} below that the same is true of the present
method. Thus all three approaches have this feature, but with the
difference that in \cite{DKLNS14} the requirement is absolute (see
\eqref{eq:neu-ass}), whereas in the present work the consequence of taking
larger values of $kL$ is only to increase the constants in our error
bounds.

Yet another approach to the Helmholtz problem with random refractive index
has been proposed recently in \cite{PS20}. There well-posedness and
stability of the sign-indefinite formulation of the stochastic continuous
problem has been proved. However, that article~\cite{PS20} does not
consider any form of numerical discretization. Other recent papers
concerned with the Helmholtz problem with variable coefficients
are~\cite{BCFG17,CF16,CFN20,GS20,GSW20}.

The main challenge in the present article lies in the design and numerical
analysis of a high-order QMC-FEM for the evaluation of expected values
(that is, of integrals with respect to $\bsy$ over a hypercube of
length~$1$) of linear functionals of the solution $u$. As with the earlier
applications of QMC-FEM to diffusion problems, the key is to find
computable bounds on appropriate mixed partial derivatives of $u$ with
respect to components of $\bsy$. The difference in this case is that
finding such bounds is now very much harder. The reason for the additional
difficulty lies in the much greater complexity of the coercive
formulation~\cite{GanMor20}. In particular, unlike the situation with the
diffusion problem, both the trial and test functions of the QMC-FEM
analysis have stochastic components.

More precisely, for each $\bsy \in U$, we seek a continuous wave field
solution in a special subspace $V$ of $H^1(D)$, see \eqref{HS} below. We
fix $u(\cdot, \bsy) \in V$ to be the unique solution of a sign-definite
weak formulation of \eqref{eq:pde_omega}--\eqref{eq:axy-unif}, and we
consider the quantity of interest (QoI) to be a bounded linear functional
$G\in V^*$ of $u(\cdot, \bsy)$, denoted in this article by $[G(u)](\bsy) =
G(u(\cdot,\bsy))$, where $V^*$ denotes the dual space of~$V$, with norm
given by \eqref{eq:dual-norm} below. An example of $G \in V^*$ is the
average wave field in the heterogenous medium: $G(u(\cdot,\bsy)) \,=\,
\int_D u(\bsx, \bsy) \,\rd\bsx$. The aim of this article is to design and
analyze efficient QMC-FEM algorithms to compute approximations to the
expected value of $G(u(\cdot,\bsy))$, expressed as an infinite-dimensional
integral over~$\bsy$:
\[
  \int_{[-\tfrac{1}{2},\tfrac{1}{2}]^\bbN} G(u(\cdot,\bsy))\,\rd\bsy.
\]
Key ingredients of our strategy are: (i) truncating the infinite series
in~\eqref{eq:axy-unif} to finitely many $s$ terms; (ii) discretizing the
solution in the spatial variable using FEM based on a mesh parameter~$h$;
and (iii) approximating the expected value integral by an $N$-point QMC
cubature rule.

For $kL \geq 1$, we prove that the combined error for the QMC-FEM
approximation is of the order
\[
  s^{-\frac{2}{p_0}+1}
  \;+\; kL\, h^{p}
  \;+\;
  \begin{cases}
  N^{-\min\big(\frac{1}{p_1} - \frac{1}{2}, 1-\delta\big)},\, \delta\in (0,\tfrac{1}{2}), & \mbox{for first order randomized QMC}, \\
  N^{-\frac{1}{p_1}}, & \mbox{for higher order deterministic QMC},
  \end{cases}
\]
where $p$ is the degree of the finite element spline basis functions
constructed using a tessellation of $D$ with mesh-width $h$, and $p_0,
p_1\in (0,1)$ satisfy the summability and wavenumber decay conditions
\begin{equation}\label{eq:cond}
  \sum_{j\ge 1} \Big(kL\,\|\psi_j\|_{L^\infty(D)}\Big)^{p_0} \le K_0
  \qquad\mbox{and}\qquad
  \sum_{j\ge 1}\Big(kL\, \|\psi_j\|_{W^{1,\infty}(D)}\Big)^{p_1} \le K_1,
\end{equation}
with $K_0, K_1\in\bbR$ independent of the wavenumber $k$. In particular,
the order constant in the error bound depends on $f$, $g$, $G$, but is
independent of $k$.

The rest of this article is organized as follows. In
Section~\ref{sec:sign-def}, for each fixed $\bsy \in U$, we introduce the
coercive formulation of the stochastic model, and recall from
\cite{GanMor20} a wavenumber-explicit spatial regularity bound on the
unique solution. In Section~\ref{sec:overview} we provide an overview of
the analysis needed to obtain the final combined error bound. In
Section~\ref{sec:param_der} we derive explicit bounds on partial
derivatives with respect to components of $\bsy$ of the solution $u$, as
needed for the QMC analysis and the construction of QMC points. In
Section~\ref{sec:dim_trunc} we quantify the effect of truncation of the
infinite series for $n(\bsx,\bsy)$. In Section~\ref{sec:fem} we describe
the error associated with high-order FEM discretization. In
Section~\ref{sec:qmc} we focus on the efficient choice of the randomized
and deterministic QMC quadrature rules. In Appendix~A we describe the
alternative small perturbation QMC-FEM approach. In Appendix~B we prove a
technical lemma.

\section{A coercive reformulation of the stochastic Helmholtz model}\label{sec:sign-def}

A coercive variational formulation  was developed and analyzed  recently
in~\cite{GanMor20} for a deterministic wave propagation model with an
inhomogeneous absorbing boundary condition.  Here we extend the method to
our stochastic model.

The first step is to recognize that given data $f \in L^2(D), g\in
L^2(\partial D)$, for each fixed $\bsy \in U$, any sufficiently regular
solution $u(\cdot, \bsy) \in H^1(D)$ of our model boundary value problem
(BVP)
\begin{align}\label{eq:pde_y}
 \left[\Delta +k^2\,n(\bsx, \bsy)\right]u(\bsx, \bsy) &\,=\, -f(\bsx)\,,
 \quad \bsx \in  D\,,
 \quad  \text{and} \quad \\
 \frac{\partial u}{\partial \bfn}(\widetilde{\bsx}, \bsy)-\ii \,k\,u(\widetilde{\bsx} , \bsy) &\,=\, g(\widetilde{\bsx} )\,,
 \quad   \widetilde{\bsx} \in \partial D\,, \nonumber
\end{align}
has three additional smoothness properties: (i) $\Delta u(\cdot, \bsy) \in
L^2(D)$; (ii) $\frac{\partial u}{\partial \bfn}(\cdot, \bsy) \in
L^2(\partial D)$; and (iii) $u(\cdot, \bsy) \in H^1(\partial D)$. The
first two properties follow directly from~\eqref{eq:pde_y} and the third
property follows from the fact that $\nabla u(\cdot, \bsy) \in
L^2(\partial D)$, since $\nabla u(\cdot, \bsy) = \bfn\, \frac{\partial
u}{\partial \bfn}(\cdot, \bsy) + \nabla_{\partial D} u(\cdot, \bsy)$ and
we have the regularity result from~\cite[Theorem 4.2]{McLean00} that
surface gradient $\nabla_{\partial D} u(\cdot, \bsy) \in L^2(\partial D)$.
We incorporate such natural smoothness properties of the Helmholtz PDE
model~\eqref{eq:pde_y} in the following Hilbert space:
\begin{equation}
\label{HS}
 V \,:=\, \Big\{
 w \;:\; w \in H^1(D)\,, \; \Delta w \in L^2(D)\,, \; w \in H^1 (\partial D)\,,\;
 \frac{\partial w}{\partial \bfn} \in L^2(\partial D) \Big\}\,.
\end{equation}
Following~\cite{GanMor17, GanMor20}, for the stochastic heterogenous model
we equip $V$ with the following norm
\begin{align} \label{NORM}
 \|w\|^2_V &\,:=\,
 k^2\,\|w\|^2_{L^2(D)} + \|\nabla w\|^2_{L^2(D)} + \frac{1}{k^2}\,\|\Delta w\|^2_{L^2(D)} \nonumber \\
 &\qquad
 + L\, \Big( k^2\,\|w\|^2_{L^2(\partial D)} + \|\nabla_{\partial D}w\|^2_{L^2(\partial D)}
 + \Big\|\frac{\partial w}{\partial \bfn}\Big\|^2_{L^2(\partial D)} \Big)\,,
\end{align}
where $L$ is a characteristic length of the Lipschitz domain $D\subset
\mathbb{R}^d,~d=2,3$. Note that each term in \eqref{NORM} scales in the
same way under a change of length scale. Throughout this article, when
considering the trace of a function $w \in H^{s}(D)$ as a function in
$H^{s-1/2}(\partial D)$, for notational convenience we drop the Dirichlet
trace operator $\gamma$. (That is, we drop $\gamma$ and write $w$ instead
of $\gamma\, w$ whenever it is considered as a function on $\partial D$.)

For each fixed $\bsy \in U$, to prove the unique solvability of the
BVP~\eqref{eq:pde_y} we ensure the coercivity property of the variational
formulation by assuming the following three conditions on the geometry and
medium of the wave propagation:
\begin{enumerate}
\item[(A0)] \label{A0} %
The domain $D \subset \mathbb{R}^d$, for $d=2,3$, with diameter $L$,
is star-shaped with respect to a ball centered at the origin. That is,
there exist constants $\widehat{\gamma}$, $\widehat{\mu}$ with
$0<\widehat{\gamma} \leq \widehat{\mu} \leq 1$ such that
\begin{equation*} 
 \widehat{\gamma}\, L \,\leq\, \widetilde\bsx\cdot \bfn(\widetilde\bsx) \,\leq\, \widehat{\mu} L\,,
 \qquad \widetilde {\bsx} \in \partial D\,.
\end{equation*}
We now fix $L$ by defining $L:=\sup_{\bsx\in D}\|\bsx\|$, where
$\|\bsx\|$ is the Euclidean norm of $\bsx$.
\item[(A1)]\label{A1}%
For $\bsy\in U$ and $\bsx\in D$, there exist constants $n_{\max},
n_{\min}, b_{\max}$ and $b_{\min}$ such that almost everywhere
\begin{equation}\label{eq:coer_assump}
 0 \,<\, n_{\min} \,\le\, n(\bsx,\bsy) \,\le\, n_{\max}\,,
 \end{equation}
\begin{equation}\label{eq:weak-nontrap}
 0 \,<\, b_{\min} \,\le\, \nabla \cdot (\bsx\, n(\bsx,\bsy)) \,\le\, b_{\max}\,,
 \qquad b_{\min} \,>\, (d-2)\,n_{\max}\,.
 \end{equation}
 \item[(A2)] \label{A2}%
The mean field and perturbation functions satisfy $n_0\in
W^{1,\infty}(D)$, $\psi_j \in W^{1,\infty}(D)$ and $\sum_{j\geq
1}\|\psi_j\|_{W^{1,\infty}(D)} < \infty$, where throughout the article
\[
  \|w\|_{W^{1,\infty}(D)}
  := \max\big\{\|w\|_{L^\infty(D)},\;L\, \|\nabla w\|_{L^\infty(D)}\big\}.
\]
\end{enumerate}

The positivity and boundedness of the refractive index in
\eqref{eq:coer_assump} is well known for all practical heterogeneous wave
propagation media. As described in detail in~\cite[Remark 2.1]{GanMor20},
the two inequalities on $b_{\min}$ in \eqref{eq:weak-nontrap} are
necessary to ensure the physical constraint that the (geometric-optical)
rays are \emph{non-trapping}~\cite[Page 191]{EgoShu93}. For a detailed
geometric interpretation related to the positivity condition
in~\eqref{eq:weak-nontrap}, see~\cite[Section~7]{GPS19}.

To develop the sign-definite variational formulation of the BVP, we
consider the following operators~\cite{GanMor17,GanMor20}
\begin{equation}
 \label{MU}
 \M_\ell\, w \;:=\;
 \bsx\cdot\nabla w \,-\, \ii \,k\,L\,\widehat{\beta_\ell}\, w \,
 +\, \alpha_\ell\, w, \quad \ell=1,2\,.
\end{equation}
The four parameters $\alpha_1, \alpha_2, \widehat{\beta_1},
\widehat{\beta_2} \in \mathbb{R}$  and an additional parameter $A \in
\mathbb{R}$  will subsequently play a crucial role. To explain the
notation, the three parameters without the ``hat'' tag are independent of
the geometry, while the parameters tagged with a ``hat'' will occur in
this article in combination with the ``acoustic size'' $k\,L$.

Next, for each fixed $\bsy \in U$ and for $f\in L^2(D)$ and $g\in
L^2(\partial D)$, with $\La$ and  $n(\bsx,\bsy)$ given by
\eqref{eq:pdeoper_omega}--\eqref{eq:axy-unif}, we recall a sesquilinear
form $\bilin_\bsy :V\times V \to \mathbb{C}$ and an antilinear functional
$\antilin_{\bsy,f,g} =  \antilin_{\bsy} :V\to\mathbb{C}$, introduced
in~\cite{GanMor20}:
\begin{align}
\label{VSF}
 \bilin_\bsy(v,w) &\,:=\,
 \int_{D} \Big[
 \Big(\M_2 v+\frac{A}{k^2}\La v\Big)\overline{\La w}
 \,+\, \big(2-d+\alpha_1+\alpha_2 + \ii\,k\,L(\widehat\beta_1-\widehat\beta_2)\big)\nabla v \cdot \overline{\nabla w}
 \nonumber \\
 &\qquad\qquad\qquad\qquad +
 \big(-\alpha_1-\alpha_2-\ii\,k\,L(\widehat\beta_1-\widehat\beta_2)\big)\,
 k^2\,n\,v\,\overline{w}
 +k^2\,\big(\nabla\cdot(\bsx n)\big)\,v\,\overline{w}\, \Big]
 \,\rd\bsx \nonumber\\
 & \qquad
 -\int_{\partial D}
 \Big[\overline{\M_1 w}\, \ii\,k\,v
 + \big(\bsx\cdot\nabla_{\partial D}v - \ii\,k\,L\,\widehat\beta_2\,v \,
 +\, \alpha_2\, v\big)
 \overline{\frac{\partial w}{\partial \bfn}} \nonumber\\
 &\qquad\qquad\qquad\qquad\qquad\qquad
 +(\bsx\cdot\bfn)
  \big(k^2\,n\,v\,\overline{w} -\nabla_{\partial D} v \cdot \overline{\nabla_{\partial D} w}\big)
  \Big]\,\rd S\,,
\end{align}
and
\begin{equation} \label{VF}
 \antilin_\bsy(w) \,:=\,
 \int_{D}\Big(\overline{\M_1 w}-\frac{A}{k^2}\overline{\mathcal{L} w}\Big)f \,\rd\bsx
 + \int_{\partial D}\overline{\M_1 w}\,g\,\rd S\,.
\end{equation}

Using the technical details in the proof of~\cite[Section~2]{GanMor20}, we
have the following consistency result connecting the PDE model and the
variational formulation determined by the above sesquilinear form and
antilinear functional: For each $\bsy \in U$, if $u(\cdot, \bsy) \in
H^1(D)$ solves the wave propagation PDE model~\eqref{eq:pde_y}, then $u
(\cdot, \bsy) \in V$ satisfies the variational equation
\begin{equation}\label{eq:ses_var}
 \bilin_\bsy(u,w) \,=\, \antilin_\bsy(w) \qquad \text{for all}\quad  w\in V\,.
\end{equation}

The following coercivity, continuity, and unique solvability of
~\eqref{eq:ses_var}  with wavenumber-explicit bounds follow from  similar
results proved in~\cite{GanMor20}. In particular, for acoustic size $kL
\geq 1$, the $V$-norm spatial regularity bound of the unique solution of
the wave propagation model is independent of the wavenumber. Such
wavenumber-explicit bounds play a crucial role in the analysis and
construction of QMC approximations. Below we use the standard norm for the
dual $V^*$ of~$V$:
\begin{equation} \label{eq:dual-norm}
 \|G\|_{V^*} \,:=\,
 \sup\left\{ \frac{|G(w)|}{\|w\|_V} \,:\,w\in V\,, w\neq 0\right\}.
\end{equation}

\begin{theorem}[{\cite[Theorems~2.1, 3.1, 3.2, and~4.1]{GanMor20}}]
 \label{COERTHM}
Let the assumptions \textnormal{(A0)} and \textnormal{(A1)} hold. If the
three parameters $A, \alpha_1, \widehat{\beta_1}$ are chosen such that
\begin{align} \label{eq:restrictions}
 \frac{d-2}{2} \,<\,\alpha_1\,<\,\frac{b_{\min}}{2\,n_{\max}}\,, \quad
 0\,<A\,<\frac{b_{\min}-2\,\alpha_1\,n_{\max}}{2\,n_{\max}^2}\,, \quad
 \widehat{\beta_1} \,\geq\, \frac{n_{\max}\,\widehat{\mu}}{2}
 + \frac{2\,\widehat{\mu}^2}{\gamma} + \frac{\widehat{\gamma}}{2},
\end{align}
then for all $\bsy\in U$, $f\in L^2(D)$ and $g\in L^2(\partial D)$ we have
\begin{align}
 \operatorname{Re} [\bilin_\bsy(w,w)] \label{eq:coer_res}
 &\,\geq\, C_{\rm coer}\, \|w\|_V^2 && \text{for all}\quad w \in V\,,
 \\
 |\bilin_\bsy(v,w)| \label{eq:cont}
 &\,\leq\, C_{\rm cont}(kL)\, \|v\|_V\, \|w\|_V &&  \text{for all}\quad v,w \in V\,,
 \\
 \|\antilin_{\bsy,f,g}\|_{V^*} \label{eq:G-bd}
  &\,\leq\, C_{\rm func}(kL)\, \big(L\, \|f\|_{L^2(D)} + L^{1/2}\,\|g\|_{L^2(\partial D)} \big),
\end{align}
with
\begin{align*}
 C_{\rm coer} 
 &\,:=\,
 \frac{1}{2}\min \left\{  2-d+2\alpha_1\,,\, b_{\min}-2\,\alpha_1\,n_{\max}-2\,A\,n_{\max}^2\,,\,
 A,\,\frac{\widehat{\gamma}}{2}\right\},
 \\
 C_{\rm cont}(kL) 
 &\,:=\, \sqrt{3} \max \left\{
 \begin{array}{l}
 |2-d+\alpha_1+\alpha_2|+kL\,|\widehat\beta_1-\widehat\beta_2|\,,\,  \\
 A\,n_{\max}+|\alpha_2-\ii\,kL\,\widehat\beta_2| + kL + A\,, \\
 \displaystyle\frac{\alpha_1}{kL} + \widehat\beta_1 + n_{\max}\,\hat\mu\,,\quad
 \displaystyle\frac{|\alpha_2|}{kL} + |\widehat\beta_2| + 2\,\hat\mu\,,\quad\,2\,, \vspace{0.1cm} \\
 \big(|\alpha_1+\alpha_2|+ b_{\max}+kL\,|\widehat\beta_1-\widehat\beta_2|\big) n_{\max} \\
 \quad + \big(A\,n_{\max}^2+n_{\max}|\alpha_2-\ii\,kL\,\widehat\beta_2|\big) + kL\,n_{\max}+A\,n_{\max}
 \end{array}
 \right\},
 \\
 C_{\rm func}(kL) 
 &\,:=\, \sqrt{3}\max\left\{
 1\,,\, \frac{A}{kL}\,,\, \frac{\alpha_1+A\,n_{\max}}{kL} + \widehat\beta_1
 \right\}.
\end{align*}
The coercivity constant $C_{\rm coer}$ is independent of the wavenumber.
The continuity constant satisfies $C_{\rm cont}(kL) = \calO\big(kL +
(kL)^{-1}\big)$. The functional constant satisfies $C_{\rm func}(kL) =
\calO\big(1 + (kL)^{-1}\big)$, and so is bounded independently of the
wavenumber if $kL \ge 1$.

Consequently, for each $\bsy \in U$, the variational
formulation~\eqref{eq:ses_var} has a unique solution $u(\cdot, \bsy) \in
V$ and  satisfied the regularity bound
\begin{equation}\label{eq:ureg}
 \|u(\cdot, \bsy)\|_V \,\leq\,
 \frac{C_{\rm func}(kL)}{C_{\rm coer}}
 \big(L\, \|f\|_{L^2(D)} + L^{1/2}\,\|g\|_{L^2(\partial D)} \big)
 \qquad \text{for all} \quad \bsy \in U\,,
\end{equation}
which is bounded independently of the wavenumber if $kL \ge 1$.
\end{theorem}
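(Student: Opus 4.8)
The plan is to reduce everything to a fixed-$\bsy$ (deterministic) problem and then invoke the Morawetz-multiplier coercivity machinery of \cite{GanMor20}, the only genuinely new requirement being that the resulting constants be uniform in $\bsy\in U$. For each fixed $\bsy$, the parametric coefficient $n(\cdot,\bsy)$ satisfies exactly the hypotheses used in the deterministic analysis: (A1) supplies the two-sided bounds $n_{\min}\le n(\bsx,\bsy)\le n_{\max}$ and $b_{\min}\le\nabla\cdot(\bsx\,n(\bsx,\bsy))\le b_{\max}$ together with the non-trapping gap $b_{\min}>(d-2)n_{\max}$, all with constants independent of $\bsy$. Since $\bsy$ enters $\bilin_\bsy$ and $\antilin_\bsy$ only through $n(\cdot,\bsy)$ and $\nabla\cdot(\bsx\,n(\cdot,\bsy))$, any estimate that depends on $n$ solely through these four numbers is automatically uniform in $\bsy$; this is the mechanism that delivers the stated $\bsy$-independent constants.

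For the coercivity bound \eqref{eq:coer_res} I would set $v=w$ in \eqref{VSF} and take the real part. The volume integrand contains the cross term $\operatorname{Re}[(\M_2 w+\tfrac{A}{k^2}\La w)\overline{\La w}]$; using the Rellich--Morawetz identity to rewrite $\operatorname{Re}\int_D(\bsx\cdot\nabla w)\,\overline{\Delta w}\,\rd\bsx$ as a volume term proportional to $\|\nabla w\|^2_{L^2(D)}$ plus boundary terms, one collects the real part into three manifestly controllable volume contributions: a multiple of $\|\nabla w\|^2_{L^2(D)}$ with coefficient $(2-d+2\alpha_1)$, a zeroth-order term bounded below by $(b_{\min}-2\alpha_1 n_{\max}-2An_{\max}^2)\,k^2\|w\|^2_{L^2(D)}$, and a multiple of $k^{-2}\|\Delta w\|^2_{L^2(D)}$ with coefficient $A$. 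The restrictions \eqref{eq:restrictions} on $\alpha_1$ and $A$ are precisely what make each of these three coefficients strictly positive, the non-trapping gap in (A1) being what guarantees the admissible intervals for $\alpha_1$ and $A$ are nonempty. The remaining surface contributions are handled using the star-shaped assumption (A0), which supplies $\bsx\cdot\bfn\ge\widehat\gamma L>0$ and hence a favourable sign; the lower bound on $\widehat{\beta_1}$ in \eqref{eq:restrictions} is chosen to dominate the imaginary boundary cross terms arising from the $-\ii kL\,\widehat{\beta_\ell}$ pieces of $\M_\ell$. Taking the minimum of the four positive quantities (and a factor $\tfrac12$) yields $C_{\rm coer}$.

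For the continuity bound \eqref{eq:cont} and the functional bound \eqref{eq:G-bd} I would apply the Cauchy--Schwarz inequality term by term to \eqref{VSF} and \eqref{VF}, matching each factor to the corresponding piece of the $V$-norm \eqref{NORM} and of the data norms, with $n_{\max}$ and $b_{\max}$ entering wherever $n$ or $\nabla\cdot(\bsx n)$ appear; collecting and using $\sqrt{a^2+b^2+c^2}\le\sqrt3\max\{a,b,c\}$ gives the stated $C_{\rm cont}(kL)$ and $C_{\rm func}(kL)$, whose asymptotics $C_{\rm cont}(kL)=\calO(kL+(kL)^{-1})$ and $C_{\rm func}(kL)=\calO(1+(kL)^{-1})$ are read off directly. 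Finally, unique solvability of \eqref{eq:ses_var} follows from the complex Lax--Milgram theorem applied with \eqref{eq:coer_res}--\eqref{eq:cont}, and the regularity bound \eqref{eq:ureg} is obtained by the standard energy argument $C_{\rm coer}\|u\|_V^2\le\operatorname{Re}[\bilin_\bsy(u,u)]=\operatorname{Re}[\antilin_\bsy(u)]\le\|\antilin_{\bsy,f,g}\|_{V^*}\|u\|_V$, whence $\|u\|_V\le\|\antilin_{\bsy,f,g}\|_{V^*}/C_{\rm coer}$, into which \eqref{eq:G-bd} is substituted.

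The main obstacle I anticipate is the coercivity step, specifically the bookkeeping of the boundary integrals produced by the Rellich--Morawetz identity: one must verify that, after combining them with the explicit surface integral already present in \eqref{VSF}, the net surface contribution is nonnegative (using $\bsx\cdot\bfn\ge\widehat\gamma L$) and that the imaginary cross terms are absorbed by the choice of $\widehat{\beta_1}$. Keeping these manipulations wavenumber-explicit, so that $C_{\rm coer}$ emerges independent of $k$ while $C_{\rm cont}$ and $C_{\rm func}$ display the correct powers of $kL$, is the delicate part; the uniformity in $\bsy$, by contrast, is essentially automatic once every estimate is written in terms of $n_{\min},n_{\max},b_{\min},b_{\max}$ alone.
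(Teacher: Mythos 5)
Your proposal is correct and follows essentially the same route as the paper, which does not give its own proof but imports Theorems 2.1, 3.1, 3.2 and 4.1 of \cite{GanMor20} (the Morawetz-multiplier coercivity argument, term-by-term Cauchy--Schwarz for continuity and boundedness, and Lax--Milgram plus the standard energy estimate for \eqref{eq:ureg}). Your key added observation --- that uniformity in $\bsy$ is automatic because every constant depends on $n(\cdot,\bsy)$ only through $n_{\min}$, $n_{\max}$, $b_{\min}$, $b_{\max}$, which (A1) fixes uniformly over $U$ --- is exactly the (implicit) reasoning by which the paper transfers the deterministic results to the stochastic setting.
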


We note that \eqref{eq:cont} and \eqref{eq:G-bd} hold even without the
weak non-trapping condition \eqref{eq:weak-nontrap}. In particular, as
described in~\cite{GanMor20}, only the proof of coercivity requires all
assumptions mentioned in Theorem~\ref{COERTHM}.

\section{Overview of our method and error analysis} \label{sec:overview}

The main aim of this article is to design and analyze efficient algorithms
to compute approximations to the expected value of $G(u(\cdot,\bsy))$,
expressed as an infinite-dimensional integral over~$\bsy$:
\begin{align} \label{eq:int-unif}
  I(G(u))
  &\,:=\,
  \int_{[-\tfrac{1}{2},\tfrac{1}{2}]^\bbN} G(u(\cdot,\bsy))\,\rd\bsy \,:=\,  \lim_{s\to\infty}I_s(G(u)),
  \end{align}
  with
   \begin{align} \label{eq:int-unifs}
  &I_s(G(u))\,:=\, \int_{[-\tfrac{1}{2},\tfrac{1}{2}]^s}
        G(u(\cdot,(y_1,\ldots,y_s,0,0,\ldots)))\,\rd y_1\cdots\rd y_s\,.
\end{align}

Key ingredients of our strategy are: (i) truncating the infinite series
in~\eqref{eq:axy-unif} to finitely many $s$ terms, yielding the
dimensionally-truncated solution $u_s$; (ii) discretizing $u_s$ in the
spatial variable using FEM based on a mesh parameter $h$, leading to the
discrete solution $u_{s,h}$; and (iii) approximating the $s$-dimensional
expected value integral of $G(u_{s,h})$ by an $N$-point QMC cubature rule
$Q_{s,N}$. The precise details regarding $u_s$, $u_{s,h}$ and the QMC rule
$Q_{s,N}$ are given in later sections. For now it suffices to say that we
can write the combined error using the triangle inequality as a sum of
three terms: the \emph{dimension truncation error}, the \emph{FEM
discretization error}, and the \emph{QMC cubature error}:
\begin{align*} 
 &|I(G(u)) -  Q_{s,N} (G(u_{s,h})) | \nonumber \\
 &\,\le\, |(I - I_s)(G(u))| \,+\,  |I_s(G(u_s - u_{s,h}))| \,+\, | I_s(G(u_{s,h})) - Q_{s,N}(G(u_{s,h}))|.
\end{align*}
Alternatively, if the QMC rule is randomized then we have the mean-square
error
\begin{align*} 
 & \bbE_{\rm rqmc} \Big[ |I(G(u)) -  Q_{s,N} (G(u_{s,h});\cdot) |^2 \Big] \nonumber \\
 &\,\le\, 2\, |(I - I_s)(G(u))|^2 \,+\,  2\, |I_s(G(u_s - u_{s,h}))|^2 \,+\,
 \bbE_{\rm rqmc} \Big[| I_s(G(u_{s,h})) - Q_{s,N}(G(u_{s,h});\cdot)|^2 \Big],
\end{align*}
where the expectation $\bbE_{\rm rqmc}$ is taken with respect to the
random element in the QMC rule (see Section~\ref{sec:qmc}).


\section{Stochastic parameter regularity of random wave field}\label{sec:param_der}

For the error analysis it is crucial to understand the behavior of
multi-index high-order derivatives of the solution of~\eqref{eq:ses_var}
with respect to the stochastic variables $y_j,~j\ge 1$. To this  end,   we
first introduce some notation. For a multi-index $\bsnu = (\nu_j)_{j\ge1}$
with $\nu_j\in \{0,1,2,\ldots\}$, we write its ``order'' as $|\bsnu| :=
\sum_{j\ge 1} \nu_j$ and its ``support'' as $\supp(\bsnu) := \{j\ge 1:
\nu_j\ge 1\}$. Furthermore, we write $\bsnu! := \prod_{j\ge 1} (\nu_j!)$,
which is different from $|\bsnu|! = (\sum_{j\ge 1} \nu_j)!$. We denote by
$\indx$ the (countable) set of all ``finitely supported'' multi-indices: $
  \indx
  \,:=\,
  \{ \bsnu \in \bbN_0^\bbN : \supp(\bsnu) < \infty \}
  .
$
For $\bsnu\in\indx$, we denote the $\bsnu$-th partial derivative with
respect to the parametric variables $\bsy$ by
\begin{align*}
  \partial^{\bsnu} \,=\, \partial^{\bsnu}_{\bsy}
  \,=\, \frac{\partial^{|\bsnu|}}{\partial y_1^{\nu_1}\partial y_2^{\nu_2}\cdots} \,.
\end{align*}

For any sequence of real numbers $\bsb = (b_j)_{j\ge 1}$, we write
$\bsb^\bsnu := \prod_{j\ge 1} b_j^{\nu_j}$. By $\bsm\le\bsnu$ we mean that
the multi-index $\bsm$ satisfies $m_j\le \nu_j$ for all $j$. Moreover,
$\bsnu-\bsm$ denotes a multi-index with the elements $\nu_j-m_j$, and
$\binom{\bsnu}{\bsm} := \prod_{j\ge 1} \binom{\nu_j}{m_j}$. We denote by
$\bse_j$ the multi-index whose $j$th component is $1$ and whose other
components are $0$. We will make repeated use of the Leibniz product rule
\begin{align} \label{eq:leibniz}
  \partial^\bsnu (PQ) \,=\,
  \sum_{\bsm\le\bsnu} \binom{\bsnu}{\bsm} (\partial^{\bsm} P)\, (\partial^{\bsnu-\bsm} Q)\,.
\end{align}

For a general multi-index derivative, $\partial^{\bsnu}$, we obtain the
following result.

\begin{lemma} \label{lemma:rec1}
Let the assumptions and parameter restrictions in Theorem~\ref{COERTHM}
hold. For each $\bsy \in U$  let $u(\cdot, \bsy) \in V$ be the unique
solution of~\eqref{eq:ses_var}. Then for any $\bsnu\in\indx$ (including
$\bsnu=\bszero$) and any $u,w,z\in V$,
\begin{equation}\label{eq:der_rec}
 \bilin_\bsy(\partial^\bsnu u,w) \,=\,
 \sum_{j\in\supp(\bsnu)} \nu_j\, R_j(\partial^{\bsnu-\bse_j} u,w) + S_{\bsnu}(u,w) +  T_\bsnu(w),
\end{equation}
where
\begin{align}\label{eq:Rj}
 R_j(z, w)
 &\,:=\,
 - \int_{D} \Big[  A\,\psi_j\, z\, \overline{\La w} +
 \Big(\M_2 z+\frac{A}{k^2}\La z\Big)\,k^2\,\psi_j\,\overline{w}
 \nonumber\\
 &\qquad\qquad
 + \big(-\alpha_1-\alpha_2-\mathrm{i}\,k\,L\,(\widehat\beta_1-\widehat\beta_2)\big)\,k^2\,\psi_j\,z\,\overline{w}
 + k^2\,\big(\nabla\cdot(\bsx \psi_j) \big)\,z\,\overline{w}\Big]\,\rd\bsx
 \nonumber\\
 &\qquad
 + k^2 \int_{\partial D} (\bsx\cdot \bfn) \psi_j\,z\,\overline{w}\,\rd S,
\end{align}
\begin{align} \label{eq:Snu}
 &S_{\bsnu}(u,w) \nonumber\\ &\,:=\,
 \begin{cases}
  0 & \mbox{if}\quad |\bsnu| = 0, 1, \\
 -  A\, k^2 \displaystyle\sum_{j\in\supp(\bsnu)} \sum_{\ell\in\supp(\bsnu-\bse_j)}
 \nu_j (\bsnu-\bse_j)_\ell \int_D  \psi_j\,\psi_\ell\, (\partial^{\bsnu-\bse_j-\bse_\ell}u) \, \overline{w}
  & \mbox{otherwise},
  \end{cases}
  \end{align}
and
\begin{align} \label{eq:Fnu}
 T_\bsnu(w)
 &\,:=\,
  \begin{cases}
  \antilin_\bsy(w)  & \mbox{if} \quad \bsnu = \bszero, \\
  \displaystyle-A\int_D \psi_j\, \overline{w}\, f\, \, \rd\bsx & \mbox{if} \quad \bsnu = \bse_j, \\
  0 & \mbox{otherwise}.
  \end{cases}
\end{align}
\end{lemma}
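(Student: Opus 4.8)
The plan is to differentiate the variational identity \eqref{eq:ses_var}, $\bilin_\bsy(u,w)=\antilin_\bsy(w)$ for all $w\in V$, with respect to $\bsy$, exploiting two structural features of the forms \eqref{VSF}--\eqref{VF}. First, for fixed $v,w$ the map $\bsy\mapsto\bilin_\bsy(v,w)$ depends on $\bsy$ only through the coefficient $n=n(\cdot,\bsy)$ and through $\nabla\cdot(\bsx\,n)$, each of which, by \eqref{eq:axy-unif}, is \emph{affine} in $\bsy$: one has $\partial^{\bse_j}n=\psi_j$ and $\partial^{\bse_j}[\nabla\cdot(\bsx\,n)]=\nabla\cdot(\bsx\,\psi_j)$, while all second and higher parametric derivatives of $n$ vanish. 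Second, $\bilin_\bsy(v,w)$ is a polynomial of degree at most two in $n$; inspecting \eqref{VSF}, every occurrence of $n$ is linear except for the cross-contribution from the product $(\tfrac{A}{k^2}\La v)\,\overline{\La w}$ inside $\int_D(\M_2 v+\tfrac{A}{k^2}\La v)\,\overline{\La w}$, namely the genuinely quadratic piece $A\,k^2\int_D n^2\,v\,\overline w\,\rd\bsx$. The operators $\M_2,\Delta,\nabla$ and the surface operators carry no $\bsy$-dependence and hence commute with $\partial^\bsnu$.

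Before differentiating I would record that the mixed derivatives $\partial^\bsnu u(\cdot,\bsy)\in V$ exist for every $\bsnu\in\indx$. I would prove this by induction on $|\bsnu|$: for fixed $\bsnu$ the right-hand side of \eqref{eq:der_rec} is a bounded antilinear functional of $w$ built only from derivatives of strictly lower order (together with the data through $T_\bsnu$), so the coercivity \eqref{eq:coer_res} and the Lax--Milgram theorem furnish a unique $V$-element solving it; a difference-quotient argument, again controlled by the \emph{uniform} coercivity, identifies this element with the actual parametric derivative. With existence in hand, the recursion follows by applying $\partial^\bsnu$ to both sides of \eqref{eq:ses_var} for fixed $w$ and differentiating under the integral sign.

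The core of the proof is then the Leibniz rule \eqref{eq:leibniz} applied factor by factor and organized according to how many parametric derivatives land on an \emph{explicit} occurrence of $n$. When no derivative strikes an explicit $n$ (so all of $\partial^\bsnu$ falls on $u$), the commuting property of the spatial operators returns exactly $\bilin_\bsy(\partial^\bsnu u,w)$, which I isolate on the left. Each term in which precisely one derivative strikes a linear occurrence of $n$ produces a factor $\psi_j$ and lowers the surviving derivative of $u$ by $\bse_j$; summing the contributions of the $n$-dependent groups in \eqref{VSF} (the two pieces of $\int_D(\M_2 v+\tfrac{A}{k^2}\La v)\overline{\La w}$, the $k^2 n\,v\overline w$ term, the $\nabla\cdot(\bsx\,n)$ term, and the boundary term $(\bsx\cdot\bfn)\,k^2 n\,v\overline w$) assembles, after transposition to the right-hand side, into exactly $\sum_{j}\nu_j\,R_j(\partial^{\bsnu-\bse_j}u,w)$ with $R_j$ as in \eqref{eq:Rj} (note that the undifferentiated factors retain their full $n$, which is why $\overline{\La w}$ and $\M_2 z+\tfrac{A}{k^2}\La z$ appear there). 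The single quadratic term $A\,k^2\int_D n^2 v\overline w$ is the only source of contributions in which \emph{two} derivatives strike an explicit $n$; since this forces $|\bsnu|\ge 2$ and yields a factor $\psi_j\psi_\ell$, it produces precisely $S_\bsnu$ of \eqref{eq:Snu}. The combinatorial weight $\nu_j(\bsnu-\bse_j)_\ell$ is the Leibniz coefficient $\binom{\bsnu}{\bsm}$ for $\bsm=\bse_j+\bse_\ell$ once the ordered double sum is used to separate the $j\ne\ell$ and $j=\ell$ cases, the factor $2$ from $\partial^{\bse_j+\bse_\ell}(n^2)$ being absorbed by counting each unordered pair twice. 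Finally, $\antilin_\bsy(w)$ is affine in $\bsy$ (its only $n$-dependence is through $\overline{\La w}$ in the volume forcing), so $\partial^\bsnu\antilin_\bsy(w)$ collapses to $T_\bsnu(w)$ of \eqref{eq:Fnu}.

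I expect the main obstacle to be twofold. The first is rigorously establishing the existence and $V$-regularity of the parametric derivatives $\partial^\bsnu u$, i.e.\ justifying the difference-quotient limits uniformly in $\bsy$ so that the formal differentiation is legitimate. The second, and most delicate, is the bookkeeping needed to confirm that the one-hit contributions recombine into exactly $\sum_j\nu_j R_j$ and that the unique quadratic term produces $S_\bsnu$ with precisely the stated combinatorial coefficient; the remaining steps are routine invocations of the Leibniz rule and of the affine structure of $n$.
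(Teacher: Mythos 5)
Your proposal is correct and follows essentially the same route as the paper: differentiate the variational identity $\bilin_\bsy(u,w)=\antilin_\bsy(w)$ via the Leibniz rule, exploiting the affine dependence of $n$ on $\bsy$ so that the only source of the second-order term $S_{\bsnu}$ is the single quadratic-in-$n$ piece $A\,k^2\int_D n^2\,u\,\overline{w}\,\rd\bsx$ hidden in $\int_D(\M_2 u+\tfrac{A}{k^2}\La u)\overline{\La w}\,\rd\bsx$, and your accounting of the coefficient $\nu_j(\bsnu-\bse_j)_\ell$ agrees with the paper's, which organizes the same computation by splitting $\bilin_\bsy$ into three parts and differentiating $\La u$ and $\La w$ as units rather than expanding in powers of $n$. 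The only substantive addition is your sketched justification of the existence of $\partial^\bsnu u$ in $V$ by induction, coercivity, and difference quotients, a point the paper's proof passes over in silence.
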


\begin{proof}
For any $\bsy\in U$, let $u(\cdot, \bsy) \in V$ be the unique solution
of~\eqref{eq:ses_var}. For any $w \in V$ (independent of~$\bsy$) and any
$\bsnu\in\indx$, we will prove the lemma by differentiating and equating
the two sides of \eqref{eq:ses_var}, that is,
\begin{equation}\label{eq:diff}
  \partial^\bsnu (\bilin_\bsy(u(\cdot,\bsy),w)) \,=\, \partial^\bsnu (\antilin_\bsy(w))\,.
\end{equation}

Starting with the left-hand side of \eqref{eq:diff}, we note from the
sesquilinear form \eqref{VSF} that the factors which depend on $\bsy$ are
$u(\bsx,\bsy)$ and $n(\bsx,\bsy)$ as well as (cf.\
\eqref{eq:pdeoper_omega})
\begin{equation} \notag
  (\La u)(\bsx,\bsy) \,=\, \Delta u(\bsx,\bsy) + k^2\,n(\bsx,\bsy)\, u(\bsx,\bsy)
  \quad\mbox{and}\quad
  (\La w)(\bsx) \,=\, \Delta w(\bsx) + k^2\,n(\bsx,\bsy)\, w(\bsx)\,.
\end{equation}
Using the definition of $n(\bsx,\bsy)$ in \eqref{eq:axy-unif}, we have
\begin{align} \label{eq:diff-a}
  \partial^{\bsm} n(\bsx,\bsy)
  \,=\,
  \begin{cases}
  n(\bsx,\bsy) & \mbox{if } \bsm = \bszero, \\
  \psi_j(\bsx) & \mbox{if } \bsm = \bse_j, \\
  0 & \mbox{otherwise},
  \end{cases}
\end{align}
It follows that (suppressing from here on the dependence on $\bsx$ and
$\bsy$)
\begin{align} \label{eq:diff-law}
  \partial^\bsm (\La w)
  &\,=\,
  \begin{cases}
  \La w & \mbox{if } \bsm = \bszero, \\
  k^2\,\psi_j\, w & \mbox{if } \bsm = \bse_j, \\
  0 & \mbox{otherwise},
  \end{cases}
\end{align}
and using \eqref{eq:leibniz} we obtain
\begin{align} \label{eq:diff-lau}
  \partial^\bsnu (\La u)
  &\,=\, \Delta (\partial^\bsnu u)
  + k^2\, \sum_{\bsm\le\bsnu} \binom{\bsnu}{\bsm} (\partial^\bsm n)\, (\partial^{\bsnu-\bsm} u) \nonumber\\
  &\,=\, \Delta (\partial^\bsnu u) + k^2\,n\,(\partial^\bsnu u)
  + k^2\,  \sum_{j\in\supp(\bsnu)}  \nu_j\,\psi_j\, (\partial^{\bsnu-\bse_j} u) \nonumber\\
  &\,=\, \La (\partial^\bsnu u)
  + k^2\, \sum_{j\in\supp(\bsnu)}  \nu_j\,\psi_j\, (\partial^{\bsnu-\bse_j} u)\,.
\end{align}

To ease our derivation below, we split the sesquilinear form \eqref{VSF}
into three terms, $\bilin_\bsy(u,w) = \bilin_1(u,w) + \bilin_2(u,w) +
\bilin_3(u,w)$, based on the level of dependency on $\bsy$:
\begin{align*}
 \bilin_1(u,w) &\,:=\,
 \int_{D} \Big(\M_2u+\frac{A}{k^2}\La u\Big)\overline{\La w}\,\rd\bsx
 \\
 \bilin_2(u,w) &\,:=\,
 \int_{D} k^2 \big( \xi_2\,n + \nabla\cdot(\bsx\, n)\big)\, u\,\overline{w}\,\rd\bsx
 -\int_{\partial D} k^2\,(\bsx\cdot\bfn)\, n\,u\,\overline{w}\,\rd S\,,
 \\
 \bilin_3(u,w) &\,:=\,
 \int_{D} \xi_1\,\nabla u \cdot \overline{\nabla w}
 \,\rd\bsx \\
 &\qquad
 -\int_{\partial D}
 \Big(\overline{\M_1 w}\, \ii\,k\,u
 + \big(\bsx\cdot\nabla_{\partial D}u +\xi_3\, u\big)
 \overline{\frac{\partial w}{\partial \bfn}}
 - (\bsx\cdot\bfn)
  \nabla_{\partial D} u \cdot \overline{\nabla_{\partial D} w}\Big)\,\rd S\,,
\end{align*}
with the abbreviations $\xi_1 := 2-d+\alpha_1+\alpha_2+\mathrm{i}\,
kL(\widehat\beta_1-\widehat\beta_2)$, $\xi_2 :=
-\alpha_1-\alpha_2-\mathrm{i}\, kL(\widehat\beta_1-\widehat\beta_2)$, and
$\xi_3 := - \mathrm{i}\,kL\widehat\beta_2 + \alpha_2$.

It is easy to see that
\[
 \partial^\bsnu (\bilin_3(u,w)) \,=\, \bilin_3(\partial^\bsnu u,w)\,.
\]
Using \eqref{eq:leibniz} and \eqref{eq:diff-a} we obtain
\begin{align*}
  &\partial^\bsnu (\bilin_2(u,w))
  \,=\, \sum_{\bsm\le\bsnu} \binom{\bsnu}{\bsm} \bigg[
  \int_D k^2 \big(
  \xi_2\, (\partial^\bsm n) +  \nabla \cdot (\bsx\, (\partial^{\bsm} n))\big)
  (\partial^{\bsnu-\bsm} u) \,\overline{w}\,\rd\bsx \\
  &\qquad\qquad\qquad\qquad\qquad\qquad - \int_{\partial D}
  k^2 (\bsx\cdot\bfn) (\partial^\bsm n) (\partial^{\bsnu-\bsm} u)\,\overline{w}
  \,\rd S\bigg] \\
  &=\, \bilin_2(\partial^\bsnu u,w) \\
  &\quad
  + \sum_{j\in\supp(\bsnu)}\!\!\! \nu_j\, \bigg[
  \int_D k^2 (\xi_2\,\psi_j + \nabla \cdot (\bsx\, \psi_j))\, (\partial^{\bsnu-\bse_j} u)\,\overline{w}\,\rd\bsx
  - \int_{\partial D} k^2 (\bsx\cdot\bfn) \psi_j (\partial^{\bsnu-\bse_j} u)\,\overline{w}\,\rd S \bigg]\,.
\end{align*}
Using \eqref{eq:leibniz} and \eqref{eq:diff-law}, followed by applying
\eqref{eq:diff-lau} with $\bsnu$ replaced by $\bsnu-\bse_j$ and index $j$
replaced by~$\ell$, we obtain
\begin{align*}
  &\partial^\bsnu (\bilin_1(u,w))
  \,=\,
  \sum_{\bsm\le\bsnu} \binom{\bsnu}{\bsm} \int_D
   \Big[\partial^{\bsnu-\bsm} \Big(\M_2 u + \frac{A}{k^2} \La u\Big)\Big]
   \Big[\partial^{\bsm} (\overline{\La w})\Big] \,\rd\bsx \\
  &\,=\,
  \int_D
  \Big[\partial^{\bsnu} \Big(\M_2 u + \frac{A}{k^2} \La u\Big)\Big]\,
  \overline{\La w}\,\rd\bsx
  + \sum_{j\in\supp(\bsnu)}  \nu_j \int_D
  \Big[\partial^{\bsnu-\bse_j} \Big(\M_2 u + \frac{A}{k^2} \La u\Big)\Big]\,
  k^2\,\psi_j\, \overline{w} \,\rd\bsx\\
  &\,=\,
  \bilin_1(\partial^\bsnu u,w)
  + \int_D \frac{A}{k^2} k^2\, \sum_{j\in\supp(\bsnu)}  \nu_j\,
  \psi_j\, (\partial^{\bsnu-\bse_j} u)\,\overline{\La w}\,\rd\bsx \\
  &\qquad
  + \sum_{j\in\supp(\bsnu)}  \nu_j  \int_D \bigg[  \M_2 (\partial^{\bsnu-\bse_j} u)
  + \frac{A}{k^2} \La (\partial^{\bsnu-\bse_j} u) \\
  &\qquad\qquad\qquad\qquad\qquad
  + \frac{A}{k^2} k^2\, \sum_{\ell\in\supp(\bsnu-\bse_j)}  (\bsnu-\bse_j)_\ell\,\psi_\ell\, (\partial^{\bsnu-\bse_j-\bse_\ell} u)
  \bigg]\, k^2\,\psi_j\, \overline{w}\,\rd\bsx
  \\
  &\,=\,
  \bilin_1(\partial^\bsnu u,w) \\
  &\qquad
  + \sum_{j\in\supp(\bsnu)}\!\!\!  \nu_j \int_D \Big[
  A\, \psi_j\, (\partial^{\bsnu-\bse_j} u)\,\overline{\La w}
  + \Big(\M_2 (\partial^{\bsnu-\bse_j} u)
  + \frac{A}{k^2} \La (\partial^{\bsnu-\bse_j} u)\Big) k^2\,\psi_j\,\overline{w} \Big]\,\rd\bsx \\
  &\qquad
  + A\,k^2 \sum_{j\in\supp(\bsnu)} \sum_{\ell\in\supp(\bsnu-\bse_j)}\!\!\!  \nu_j\,(\bsnu-\bse_j)_\ell
  \int_D \psi_j\,\psi_\ell\, (\partial^{\bsnu-\bse_j-\bse_\ell} u)\,\overline{w}\,\rd\bsx\,.
\end{align*}

We note that for $|\bsnu|=1$ and $j \ge 1$, the set $\supp(\bsnu-\bse_j)$
is empty; in this case we take $\partial^{\bsnu-\bse_j-\bse_\ell}$ to be
the zero operator. Thus using~\eqref{eq:Rj} and \eqref{eq:Snu} we obtain,
\begin{align} \label{eq:final-der}
  \partial^\bsnu (\bilin_\bsy(u,w))
  \,=\, \bilin_\bsy(\partial^\bsnu u,w) - \sum_{j\in\supp(\bsnu)} \nu_j\, R_j(\partial^{\bsnu-\bse_j} u,w)
  - S_{\bsnu}(u,w)\,.
\end{align}

Now for the right-hand side of \eqref{eq:diff} we use \eqref{VF},
\eqref{eq:diff-a} and \eqref{eq:Fnu} to obtain
\begin{align} \label{eq:final-der2}
\partial^{\bsnu} (\antilin_\bsy w) \,=\, T_\bsnu(w)\,.
\end{align}
The required result is obtained by equating \eqref{eq:final-der} and
\eqref{eq:final-der2}.
\end{proof}

Next we derive a  bound on the parametric derivatives of the solution  of~\eqref{eq:ses_var} in the $V$-norm.

\begin{theorem} \label{theorem:bound}
Let the assumptions and parameter restrictions in Theorem~\ref{COERTHM}
hold, and assume additionally that \textnormal{(A2)} holds. For each $\bsy
\in U$, let $u(\cdot, \bsy) \in V$ be the unique solution
of~\eqref{eq:ses_var}. Then for all $\bsnu\in\indx$ (including
$\bsnu=\bszero$),
 \begin{equation}\label{eq:der-bd}
 \|\partial^{\bsnu}u(\cdot,\bsy)\|_{V}
 \,\le\, \frac{C_{\rm func}(kL)}{C_{\rm coer}} \big(L\, \|f\|_{L^2(D)}+L^{1/2} \|g\|_{L^2(\partial D)} \big)\,
 |\bsnu|!\,\bsUpsilon^\bsnu\;,
\end{equation}
where
\begin{align*}
 \bsUpsilon^\bsnu := \prod_{j\ge 1} \Upsilon_j^{\nu_j}\,,\qquad
  \Upsilon_j \,:=\,  C_{\rm regu}(kL)\, \|\psi_j\|_{W^{1,\infty}(D)},
\end{align*}
\begin{align}
 C_{\rm regu}(kL) \label{eq:c-regu}
 &\,:=\, \max\left\{\frac{C_R(kL)}{C_{\rm coer}} + \frac{A}{kL\,C_{\rm func}(kL)}\,,\,
  \frac{2\,C_R(kL)}{C_{\rm coer}}\,,\,
  \sqrt{\frac{2A}{C_{\rm coer}}} \right\},
\end{align}
\begin{align}
 C_R(kL) \label{eq:CR}
 &\,:=\, 2A(1+n_{\max})+ kL\big(1+\widehat\beta_2\big)  + |\alpha_2| +
 \big|\!-\alpha_1-\alpha_2-\mathrm{i}\,kL(\widehat\beta_1-\widehat\beta_2)\big| + d+1 + \hat\mu\,.
\end{align}
We have $C_R(kL) = \calO(kL+1)$, $C_{\rm func}(kL) = \calO\big(1 +
(kL)^{-1}\big)$, so $C_{\rm regu}(kL) = \calO\big(kL+(kL)^{-1}\big)$.
\end{theorem}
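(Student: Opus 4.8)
The plan is to prove the bound in Theorem~\ref{theorem:bound} by strong induction on the order $|\bsnu|$, using the recurrence \eqref{eq:der_rec} from Lemma~\ref{lemma:rec1} as the engine. The base case $\bsnu = \bszero$ is exactly the regularity bound \eqref{eq:ureg} from Theorem~\ref{COERTHM}, since $|\bszero|! = 1$ and $\bsUpsilon^\bszero = 1$. For the inductive step, fix $\bsnu$ with $|\bsnu| \ge 1$ and assume \eqref{eq:der-bd} holds for all multi-indices of strictly smaller order. The idea is to apply coercivity \eqref{eq:coer_res} with the test function $w = \partial^\bsnu u$: this gives
\begin{equation*}
 C_{\rm coer}\, \|\partial^\bsnu u\|_V^2
 \,\le\, \operatorname{Re}\big[\bilin_\bsy(\partial^\bsnu u, \partial^\bsnu u)\big]
 \,=\, \operatorname{Re}\bigg[\sum_{j\in\supp(\bsnu)} \nu_j\, R_j(\partial^{\bsnu-\bse_j} u, \partial^\bsnu u)
 + S_\bsnu(u, \partial^\bsnu u) + T_\bsnu(\partial^\bsnu u)\bigg],
\end{equation*}
after substituting \eqref{eq:der_rec} with $w = \partial^\bsnu u$. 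Since $|\bsnu| \ge 1$, the term $T_\bsnu(\partial^\bsnu u)$ contributes only when $\bsnu = \bse_j$, where it equals $-A\int_D \psi_j\,\overline{\partial^\bsnu u}\,f\,\rd\bsx$; this is the one place the forcing data reappears after the base case.

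The core estimates are bounds on $R_j$ and $S_\bsnu$. For $R_j$, I would bound $|R_j(z,w)| \le C_R(kL)\,\|\psi_j\|_{W^{1,\infty}(D)}\,\|z\|_V\,\|w\|_V$ by inspecting \eqref{eq:Rj} term by term: each factor $\psi_j$ or $\nabla\cdot(\bsx\psi_j)$ is controlled in $L^\infty$ using assumption (A2) and the definition of the $W^{1,\infty}$ norm, while the factors $\La w$, $\M_2 z$, $\nabla$-terms, and the boundary trace $(\bsx\cdot\bfn)$ are absorbed into the $V$-norms via \eqref{NORM} (here $|\bsx\cdot\bfn|\le L$ and $\|\bsx\|\le L$ on $D$ by (A0)). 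Collecting constants should reproduce $C_R(kL)$ in \eqref{eq:CR}, and the $kL$ scaling comes from the term $|{-}\alpha_1-\alpha_2-\ii\,kL(\widehat\beta_1-\widehat\beta_2)|$ and the $kL(1+\widehat\beta_2)$ contribution. For $S_\bsnu$, the Cauchy--Schwarz-type bound is $|S_\bsnu(u,w)| \le A\,k^2\sum_{j,\ell}\nu_j(\bsnu-\bse_j)_\ell\,\|\psi_j\psi_\ell\|_{L^\infty}\|\partial^{\bsnu-\bse_j-\bse_\ell}u\|_{L^2(D)}\|w\|_{L^2(D)}$; the leading $k^2$ is tamed by writing $k^2\|{\cdot}\|_{L^2(D)}^2 \le \|{\cdot}\|_V^2$ (splitting the $k^2$ as two factors of $k$, one for each $L^2(D)$ norm), which is why the square-root term $\sqrt{2A/C_{\rm coer}}$ appears in \eqref{eq:c-regu}.

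Now I divide through by $\|\partial^\bsnu u\|_V$ and substitute the inductive hypothesis for $\|\partial^{\bsnu-\bse_j}u\|_V$ and $\|\partial^{\bsnu-\bse_j-\bse_\ell}u\|_V$. Writing $Z := (C_{\rm func}/C_{\rm coer})(L\|f\|_{L^2(D)}+L^{1/2}\|g\|_{L^2(\partial D)})$ for the common prefactor, the $R_j$ terms produce a sum $\sum_j \nu_j (C_R/C_{\rm coer})\|\psi_j\|_{W^{1,\infty}}\cdot Z\,|\bsnu-\bse_j|!\,\bsUpsilon^{\bsnu-\bse_j}$, and the $S_\bsnu$ term a double sum of similar shape. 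The combinatorial heart of the argument is to show that these collapse into $Z\,|\bsnu|!\,\bsUpsilon^\bsnu$ after pulling out $\Upsilon_j = C_{\rm regu}\|\psi_j\|_{W^{1,\infty}}$, using the factorial identities $\nu_j\,|\bsnu-\bse_j|! = \nu_j (|\bsnu|-1)! \le |\bsnu|!$ and $\nu_j(\bsnu-\bse_j)_\ell\,|\bsnu-\bse_j-\bse_\ell|! \le |\bsnu|!$ (the latter since $\nu_j(\nu_j'-1+[j{=}\ell]\cdots)(|\bsnu|-2)!$ is at most $|\bsnu|!$), together with $\bsUpsilon^{\bsnu-\bse_j} = \bsUpsilon^\bsnu/\Upsilon_j$.

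\textbf{Main obstacle.} The delicate part is the bookkeeping that forces the three distinct pieces of $C_{\rm regu}(kL)$ in \eqref{eq:c-regu} to arise as a \emph{maximum} rather than a sum, so that a single geometric factor $\Upsilon_j^{\nu_j}$ suffices. Specifically, after factoring out one copy of $\|\psi_j\|_{W^{1,\infty}}$, the $R_j$ contribution demands the coefficient $C_R/C_{\rm coer} + A/(kL\,C_{\rm func})$ (the extra $A/(kL\,C_{\rm func})$ absorbing the $\bse_j$ boundary forcing term), the doubled $R_j$ count in the $S_\bsnu$ double sum forces $2C_R/C_{\rm coer}$, and the quadratic-in-$\psi$ structure of $S_\bsnu$ (carrying $k^2$ and two $W^{1,\infty}$ factors but only one factorial unit of slack) forces the square-root term $\sqrt{2A/C_{\rm coer}}$. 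Verifying that each branch of the recurrence is dominated by the corresponding branch of the max, uniformly in $\bsnu$, is where the technical care lies; the remaining steps are routine norm estimates and factorial manipulations.
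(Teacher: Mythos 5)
Your proposal is correct and follows essentially the same route as the paper: base case from \eqref{eq:ureg}, then coercivity with $w=\partial^\bsnu u$ in the recurrence \eqref{eq:der_rec}, the bounds $|R_j(z,w)|\le C_R(kL)\,\|\psi_j\|_{W^{1,\infty}(D)}\,\|z\|_V\|w\|_V$ and the $k^2$-splitting for $S_\bsnu$, and an induction producing the maximum of the three constants. The only difference is organizational: the paper delegates the factorial bookkeeping to the separate Lemma~\ref{lem:recur4}, where the factors $2c_1$ and $\sqrt{2c_2}$ arise from splitting the budget $\frac{c_1}{C}+\frac{c_2}{C^2}\le 1$ into two halves rather than from any ``doubled count'' in the double sum, but this does not affect the validity of your argument.
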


\begin{proof}
For the $\bsnu=\bszero$ case,~\eqref{eq:der-bd} follows from
\eqref{eq:ureg}. Let $|\bsnu| \geq 1$. We recall~\eqref{eq:der_rec} and
bound each term on the RHS of~\eqref{eq:der_rec}. Using the definition of
the $V$-norm in~\eqref{NORM}, for any $w \in V$ we have
\begin{equation*} 
 \|w \|_{L^2(D)} \leq \frac{1}{k} \| w \|_V, \;
 \|\nabla w \|_{L^2(D)} \leq  \|w \|_V, \;
 \|\Delta w \|_{L^2(D)} \leq k\, \|w \|_V, \;
 \|w \|_{L^2(\partial D)} \leq \frac{1}{k \sqrt{L}} \| w \|_V,
\end{equation*}
and hence using the definition of $\La$ in \eqref{eq:pdeoper_omega} and
$\M_2$ in \eqref{MU} we obtain
\begin{equation*}
 \|\La w \|_{L^2(D)} \,\leq\, k\,(1+n_{\max}) \|w \|_{V}\,, \qquad
 \|\M_2  w \|_{L^2(D)} \,\leq\,
 \Big(L+L\,|\widehat \beta_2|+\frac{|\alpha_2|}{k}\Big) \|w \|_{V}.
\end{equation*}
In addition, for all $j \geq 1$, we have
\begin{align*} 
  \|\nabla \cdot (\bsx\,\psi_j)\|_{L^\infty(D)}
  &\,=\, \| (\nabla \cdot \bsx)\psi_j + \bsx \cdot \nabla\psi_j\|_{L^\infty(D)} \nonumber\\
  &\,\le\, d\,\|\psi_j\|_{L^\infty(D)} + L\,\|\nabla\psi_j\|_{L^\infty(D)}
  \,\leq\, (d+1)\, \|\psi_j\|_{W^{1,\infty}(D)}.
\end{align*}
For $z,w \in V$, using the above bounds in~\eqref{eq:Rj}, the definition
of $C_R(kL)$ in~\eqref{eq:CR}, applying the triangle and Cauchy-Schwarz
inequalities, we obtain for $j \ge 1$,
\begin{align} \label{eq:Rj-bd}
 &|R_j(z,w)| \nonumber\\
 &\,\le\, A\,\|\psi_j\|_{L^\infty(D)}\,\|z\|_{L^2(D)}\|\La w\|_{L^2(D)} \nonumber \\
 &\qquad + k^2\,\|\psi_j\|_{L^\infty(D)}\,\|\M_2 z\|_{L^2(D)}\,\|w\|_{L^2(D)}
         + A\,\|\psi_j\|_{L^\infty(D)}\,\|\La z\|_{L^2(D)}\,\|w\|_{L^2(D)} \nonumber \\
 &\qquad + \big|-\alpha_1-\alpha_2-\mathrm{i}\,k\,L\,(\widehat\beta_1-\widehat\beta_2)\big|
 \,k^2\,\|\psi_j\|_{L^\infty(D)}\,\|z\|_{L^2(D)}\,\|w\|_{L^2(D)} \nonumber \\
 &\qquad + k^2\,\|\nabla\cdot(\bsx \psi_j) \|_{L^\infty(D)}\,\|z\|_{L^2(D)}\,\|w\|_{L^2(D)} \nonumber \\
 &\qquad + k^2\,\|\bsx\cdot \bfn\|_{L^\infty(D)}\,\|\psi_j\|_{L^\infty(D)}\,\|z\|_{L^2(\partial D)}\,\|w\|_{L^2(\partial D)}
 \nonumber \\
 &\,\le\, A\,(1+n_{\max})\,\|\psi_j\|_{L^\infty(D)}\,\|z\|_V\|w\|_V \nonumber \\
 &\qquad + k\,\Big(L+L\,|\widehat \beta_2|+\frac{|\alpha_2|}{k}\Big)\,\|\psi_j\|_{L^\infty(D)}\,\|z\|_V\,\|w\|_V
         + A\,(1+n_{\max})\,\|\psi_j\|_{L^\infty(D)}\,\|z\|_V\,\|w\|_V \nonumber \\
 &\qquad + \big|-\alpha_1-\alpha_2-\mathrm{i}\,k\,L\,(\widehat\beta_1-\widehat\beta_2)\big|
 \,\|\psi_j\|_{L^\infty(D)}\,\|z\|_V\,\|w\|_V \nonumber \\
 &\qquad + (d+1)\,\|\psi_j\|_{W^{1,\infty}}\,\|z\|_V\,\|w\|_V
 + \hat\mu\,\|\psi_j\|_{L^\infty(D)}\,\|z\|_V\,\|w\|_V
 \nonumber \\
 &\,\le\, \Big[2\,A\,(1+n_{\max}) + kL\,\big(1+|\widehat \beta_2|)+ |\alpha_2|
  + \big|-\alpha_1-\alpha_2-\mathrm{i}\,k\,L\,(\widehat\beta_1-\widehat\beta_2)\big| + d+1 + \hat\mu \Big] \nonumber\\
  &\qquad \times \|\psi_j\|_{W^{1,\infty}}\,\|z\|_V\,\|w\|_V \nonumber \\
  &\,=\, C_R(kL)\, \|\psi_j\|_{W^{1,\infty}(D)}\, \|z\|_{V}\, \|w\|_{V}.
\end{align}

Similarly, with $u(.,\bsy) \in V$ being the solution of~\eqref{eq:ses_var}
and applying \eqref{eq:G-bd} and the above bounds in~\eqref{eq:Snu} and
\eqref{eq:Fnu}, for any $\bsnu\in\indx$, including $\bsnu=\bszero$, we
obtain
\begin{align} \label{eq:S-bd}
 &|S_{\bsnu}(u,w) + T_\bsnu(w)| \nonumber\\
 &\,\le\,
 \begin{cases}
  \|\antilin_\bsy\|_{V^*}\,\|w\|_V
  & \mbox{if } \bsnu = \bszero, \\
  A\,\|\psi_j\|_{L^\infty(D)} \|w\|_{L^2(D)}\, \|f\|_{L^2(D)}
  & \mbox{if } \bsnu = \bse_j, \\
 A k^2 \!\!\displaystyle\sum_{j\in\supp(\bsnu)} \sum_{\ell\in\supp(\bsnu-\bse_j)} \!\!\!\!\!
 \nu_j (\bsnu-\bse_j)_\ell\, \|\psi_j\|_{L^\infty(D)}\, \|\psi_\ell\|_{L^\infty(D)}\,
 \|\partial^{\bsnu-\bse_j-\bse_\ell}u\|_{L^2(D)} &\!\!\!\!\!\|w\|_{L^2(D)} \\
  & \mbox{otherwise},
  \end{cases}
 \nonumber\\
 &\,\le\,  \bbS_{\bsnu}(u)\, \|w\|_{V}\,,
\end{align}
where
\begin{align} \label{eq:bSnu}
 &\bbS_{\bsnu}(u) \nonumber \\
 &\,:=\,
 \begin{cases}
  C_{\rm func}(kL)\,\big(L\, \|f\|_{L^2(D)}+L^{1/2} \|g\|_{L^2(\partial D)} \big) & \mbox {if} \quad \bsnu = \bszero, \\
  \displaystyle \frac{A}{k}\, \|\psi_j\|_{L^\infty(D)}  \|f\|_{L^2(D)} & \mbox{if} \quad \bsnu = \bse_j, \\
  \displaystyle A \sum_{j\in\supp(\bsnu)} \sum_{\ell\in\supp(\bsnu-\bse_j)}
  \nu_j (\bsnu-\bse_j)_\ell \, \|\psi_j\|_{L^\infty(D)} \|\psi_\ell\|_{L^\infty(D)} \,\|\partial^{\bsnu-\bse_j-\bse_\ell} u\|_{V} \,
  & \mbox{otherwise}.
  \end{cases}
\end{align}

Taking now $w = \partial^\bsnu u(\cdot,\bsy)$ in~\eqref{eq:der_rec}, using
the coercivity property~\eqref{eq:coer_res} as lower bound, and using
\eqref{eq:Rj-bd} and \eqref{eq:S-bd} as upper bounds, we obtain
\[
  C_{\rm coer}
  \|\partial^{\bsnu}u\|_{V}^2 \,\leq \left |\bilin_\bsy(\partial^\bsnu u,\partial^\bsnu u)\right |
  \,\leq\, \bigg( \sum_{j\in\supp(\bsnu)} \!\!\!\!\!\nu_j\, C_R(kL)\, \|\psi_j\|_{W^{1,\infty}(D)}\, \|\partial^{\bsnu-\bse_j} u\|_V
  + \bbS_{\bsnu}(u) \bigg)\, \| \partial^{\bsnu}u\|_V\,,
\]
and hence (now showing dependence on $\bsy$)
\begin{align}\label{eq:der_bd1}
 \|\partial^{\bsnu}u(\cdot,\bsy)\|_{V}
   \,\le\, \frac{1}{C_{\rm coer}} \bigg(
   C_R(kL) \sum_{j\in\supp(\bsnu)} \nu_j\,\|\psi_j\|_{W^{1,\infty}(D)}\,
   \|\partial^{\bsnu-\bse_j}u(\cdot,\bsy)\|_{V}  + \bbS_\bsnu(u(\cdot,\bsy))\bigg)\,.
\end{align}

The desired result now follows from \eqref{eq:bSnu} and \eqref{eq:der_bd1}
by applying Lemma~\ref{lem:recur4} with
\begin{align*}
 &\bbA_\bsnu \,=\, \| \partial^\bsnu u(\cdot,\bsy)\|_V \,,\quad
 B \,=\, \frac{C_{\rm func}(kL)}{C_{\rm coer}} \big(L\, \|f\|_{L^2(D)}+L^{1/2} \|g\|_{L^2(\partial D)} \big)\,,
 \\
 &\Psi_j \,=\, \|\psi_j\|_{W^{1,\infty}(D)}\,,\quad
 c_0 \,=\, \frac{C_R(kL)}{C_{\rm coer}} + \frac{A}{kL\,C_{\rm func}(kL)}\,, \quad
 c_1 \,=\, \frac{C_R(kL)}{C_{\rm coer}}\,, \quad
 c_2 \,=\, \frac{A}{C_{\rm coer}}\,.
\end{align*}
The value of $B$ is determined by taking $\bsnu = \bszero$ in
\eqref{eq:bSnu} and \eqref{eq:der_bd1}. The values of $c_1$ and $c_2$
follow easily by taking $|\bsnu|\ge 2$ in \eqref{eq:bSnu} and
\eqref{eq:der_bd1}. The remaining case of $|\bsnu| = 1$ is slightly more
complicated: taking $\bsnu = \bse_j$ in \eqref{eq:bSnu} and
\eqref{eq:der_bd1} yields
\begin{align*}
 \|\partial^{\bse_j}u(\cdot,\bsy)\|_{V}
 &\,\le\, \frac{1}{C_{\rm coer}} \bigg(
   C_R(kL)\,\|\psi_j\|_{W^{1,\infty}(D)}\,\|u(\cdot,\bsy)\|_V
   + \frac{A}{k}\,\|\psi_j\|_{L^\infty(D)}\,\|f\|_{L^2(D)}\bigg) \\
 &\,\le\, \Big(\frac{C_R(kL)}{C_{\rm coer}} + \frac{A}{kL\,C_{\rm func}(kL)}\Big)\,\Psi_j\, B\,,
\end{align*}
which gives the value of $c_0$. With these values we obtain $C_{\rm
regu}(kL) = \max\left\{c_0, 2c_1,\sqrt{2c_2}\right\}$ as given in
\eqref{eq:c-regu}. This completes the proof.
\end{proof}

\section{Stochastic refractive index dimension truncation}\label{sec:dim_trunc}

For simulation of the stochastic wave propagation induced by the
refractive index, we need to truncate the infinitely many terms in the
ansatz~\eqref{eq:axy-unif}. To analyze the dimension truncation error, it
is convenient to introduce an operator theoretical framework which
incorporates the boundary condition.

Recalling \eqref{eq:pde_y}, for each $\bsy \in U$ we now define the
operator $\Lad(\bsy): V \rightarrow L^2(D) \times L^2(\partial D)$ by
\begin{equation}\label{eq:pde_y_op}
 [\Lad(\bsy) w](\bsx, \bsxt)
 \,:=\,
 \begin{pmatrix}
 \left[\Delta +k^2\,n(\bsx, \bsy)\right] w(\bsx)  \\
 \displaystyle\frac{\partial w}{\partial \bfn}(\bsxt) - \ii \,k\,w(\bsxt)
 \end{pmatrix},
 \qquad \bsx \in  D, \quad \widetilde{\bsx} \in \partial D.
\end{equation}
Then \eqref{eq:pde_y} can be expressed as
\begin{equation*} 
 [\Lad(\bsy) u(\cdot,\bsy)](\bsx, \bsxt)
 \,=\,
 \begin{pmatrix} -f(\bsx) \\ g(\bsxt) \end{pmatrix},
 \qquad \bsx \in  D, \quad \widetilde{\bsx} \in \partial D.
\end{equation*}
We equip $L^2(D) \times L^2(\partial D)$ with the weighted product space
norm
\begin{equation*} 
 \big\|\sbinom{f}{g}\big\|_{L^2(D)\times L^2(\partial D)}
\,:=\, L\, \| f \|_{L^2(D)} + \sqrt{L} \,\| g \|_{L^2(\partial D)},
\qquad f\in L^2(D), \quad g\in L^2(\partial D).
\end{equation*}
It is easy to check that $\Lad(\bsy)$ is a bounded linear operator.

From Theorem~\ref{COERTHM} we conclude that $\Lad(\bsy)$ is boundedly
invertible for all $\bsy\in U$. Indeed, for any $\sbinom{f}{g} \in L^2(D)
\times L^2(\partial D)$ we can write
\begin{equation*} 
 u(\cdot, \bsy) \,=\, \Ladinvy \sbinom{-f}{g},
 \quad\mbox{with}\quad
 \big \| \Ladinvy \sbinom{-f}{g} \big\|_V
 \,\leq\, \frac{C_{\rm func}(kL)}{C_{\rm coer}}\, \big\| \sbinom{f}{g} \big\|_{L^2(D)\times L^2(\partial D)},
\end{equation*}
and therefore
\begin{equation}\label{eq:inv-Lu-norm}
 \big\| \Ladinvy \big\| \,\le\, \frac{ C_{\rm func}(kL)}{C_{\rm coer}},
\end{equation}
which is bounded independently of the wavenumber if $kL \ge 1$.

Corresponding to~\eqref{eq:axy-unif}, for a truncation parameter $s$ we
consider a truncated refractive index (essentially by setting $y_j=0$ for
$j> s$)
\begin{equation*} 
 n_s(\bsx,\bsy)
 \,=\, n_s(\bsx,\bsy_{\{1:s\}}) \,=\, n_0(\bsx) + \sum_{j= 1}^s y_j\, \psi_j(\bsx),
\end{equation*}
and define the operator $\Lads(\bsy) = \Ladys: V \rightarrow L^2(D) \times
L^2(\partial D)$ as in~\eqref{eq:pde_y_op} but with $n$ replaced by $n_s$.
Then we have also
\begin{equation}\label{eq:inv-Lu-norm-s}
 u_s(\cdot, \bsy) = [\Lad_s(\bsy)]^{-1} \sbinom{-f}{g}
 \qquad  \text{and} \qquad \big\| [\Lad_s(\bsy)]^{-1} \big\| \,\le\, \frac{ C_{\rm func}(kL)}{C_{\rm coer}}.
\end{equation}

For a fixed truncated dimension $s$, in the following theorem we will
estimate  the approximation error $(I-I_s)(G(u))$, where the
infinite-dimensional integral $I$ and the finite-dimensional integral
$I_s$ are as defined in~\eqref{eq:int-unif}--\eqref{eq:int-unifs}. The
proof of the estimate is based on the dimension truncation error of the
integrand
\[
  u(\cdot, \bsy) - u_s(\cdot, \bsy)
  \,=\, \big( [\Lad(\bsy)]^{-1} - [\Lad_s(\bsy)]^{-1} \big) \sbinom{-f}{g}
\]
by a Neumann series argument. The first critical step is to recogonize
that we can write the difference operator $[\Lad(\bsy) - \Lad_s(\bsy)]: V
\rightarrow L^2(D) \times L^2(\partial D)$ as
\begin{equation}\label{eq:L-Ls}
 [\Lad(\bsy) - \Lad_s(\bsy)] w \,=\, \sum_{j \geq s+1} y_j\, \Laj\, w,
\end{equation}
with operators $\Laj : V \rightarrow L^2(D) \times L^2(\partial D)$
defined as
\begin{equation}\label{eq:Tj}
  \Laj\, w \,:=\, k^2\, \binom{\psi_j\,w}{0}, \qquad j \geq 1.
 \end{equation}
The proof follows the general argument of \cite{Gan18} but there are some
key differences which mean that we do not need to impose the kind of small
perturbation assumption discussed in Appendix~\ref{append:pert}.

For developing the dimension truncation and QMC-FEM analysis in this
article, we will impose the following assumptions on the perturbation
functions $\psi_j$ in~\eqref{eq:axy-unif}:
\begin{enumerate}
\item[(A3)]\label{A3}%
 The sequence $\psi_j$ is ordered:  $\|\psi_1\|_{L^\infty(D)}\ge
\|\psi_2\|_{L^\infty(D)} \ge\cdots$.
\item[(A4)]\label{A4}%
There exists $p_0\in (0,1)$ and $K_0\in\bbR$ independently of $k$ such
that
\begin{align} \label{eq:K0}
  \sum_{j\ge 1} \Big[(kL + 1)\,\|\psi_j\|_{L^\infty(D)}\Big]^{p_0} \,\le\, K_0 \,<\, \infty.
\end{align}
\item[(A5)]\label{A5}%
There exists $p_1 \in (0,1)$ and $K_1\in\bbR$ independently of $k$
such that
\begin{align} \label{eq:K1}
  \sum_{j\ge 1} \Big[\big(kL + (kL)^{-1}\big)\, \|\psi_j\|_{W^{1,\infty}(D)} \Big]^{p_1} \,\le\, \,K_1\, < \infty.
\end{align}
\end{enumerate}

These conditions are similar to counterpart conditions assumed for the
diffusion model in~\cite{KSS12} and also for general class of operator
equations in \cite{DKLNS14,DKLS16}, but now with explicit dependence
on~$kL$. We use the assumption (A5) in the next section to obtain QMC
error bounds.

\begin{theorem} \label{theorem:truncation}
Let the assumptions \textnormal{(A0)}--\textnormal{(A4)} and parameter
restrictions in Theorem~\ref{COERTHM} hold. For every $\bsy \in U$, $f\in
L^2(D)$ and $g\in L^2(\partial D)$, let $u(\cdot,\bsy) \in V$ be the
unique solution of~\eqref{eq:pde_y}, and for each $s\in\bbN$ let
$u_s(\cdot,\bsy)$ denote the solution of the truncated version
of~\eqref{eq:pde_y} with $n$ replaced by~$n_s$. Then for every linear
functional $G\in V^*$, there exist a constant $C$ independent of $s, f, g,
G$ and $kL$ such that
\begin{align}   \label{eq:trunc-unif}
  |(I-I_s)(G(u))| &\,=\, |I(G(u-u^s))|
  \,\leq\, C\,\frac{C_{\rm func}(kL)}{C_{\rm coer}}\,
  \|\sbinom{f}{g}\|_{L^2(D) \times L^2(\partial D)}\,\|G\|_{V^*}\,
  s^{-\frac{2}{p_0}+1},
\end{align}
which is bounded independently of the wavenumber if $kL \geq 1$.
\end{theorem}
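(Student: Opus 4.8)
The plan is to estimate the dimension truncation error via a Neumann series expansion of the resolvent difference, combined with the decay of the $\psi_j$ encoded in (A4). First I would use the identity
\[
 u(\cdot,\bsy) - u_s(\cdot,\bsy)
 \,=\, \big(\Ladinvy - \Ladinvys\big)\sbinom{-f}{g}
 \,=\, \Ladinvy\,[\Lads(\bsy) - \Lad(\bsy)]\,\Ladinvys\sbinom{-f}{g},
\]
which is the standard resolvent identity written so that the difference operator \eqref{eq:L-Ls} sits in the middle. Applying $G$ and using \eqref{eq:inv-Lu-norm}, \eqref{eq:inv-Lu-norm-s}, and \eqref{eq:Tj} would give a pointwise bound on $|G(u(\cdot,\bsy)-u_s(\cdot,\bsy))|$ that is linear in each $y_j$ for $j\ge s+1$, with the operator norm of each $\Laj$ controlled by $k^2\|\psi_j\|_{L^\infty(D)}$. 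Using the weighted product norm, $\|\Laj\| \lesssim (kL)\,\|\psi_j\|_{L^\infty(D)}\cdot (\text{scaling})$, so each factor carries one power of $kL\,\|\psi_j\|_{L^\infty(D)}$, matching the summability exponent in (A4).

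The key simplification, following the argument of~\cite{Gan18}, is that integrating over $\bsy\in U$ kills the \emph{first-order} contribution: since each $y_j$ with $j\ge s+1$ has mean zero on $[-\tfrac12,\tfrac12]$ and the leading term of the expansion is \emph{linear} in those tail variables, the single-sum term integrates to zero. I would therefore expand the resolvent one step further, writing
\[
 \Ladinvys \,=\, \Ladinvy \;+\; \Ladinvy\,[\Lad(\bsy)-\Lads(\bsy)]\,\Ladinvys,
\]
so that the surviving contribution to $I(G(u-u_s))$ is genuinely \emph{quadratic} in the tail variables. This is what produces the exponent $-\tfrac{2}{p_0}+1$ rather than the first-order rate $-\tfrac{1}{p_0}+1$. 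After integration the surviving term is bounded by a double sum $\sum_{i,j\ge s+1} \|\Lat_i\|\,\|\Lat_j\|$ times the product of resolvent norms and $\|\sbinom{f}{g}\|$, each resolvent norm contributing a factor $C_{\rm func}(kL)/C_{\rm coer}$.

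The remaining step is purely analytic: bounding the tail double sum $\big(\sum_{j\ge s+1}(kL+1)\|\psi_j\|_{L^\infty(D)}\big)^2$ using (A3)--(A4). By the ordering assumption (A3) and the standard summability-to-tail estimate (for $p_0\in(0,1)$, if $\sum_j a_j^{p_0}\le K_0$ with $a_j$ nonincreasing then $\sum_{j>s}a_j \le C\,K_0^{1/p_0}\,s^{-1/p_0+1}$), the tail sum of $(kL+1)\|\psi_j\|_{L^\infty(D)}$ is $\calO\big(s^{-1/p_0+1}\big)$, and squaring gives the claimed $s^{-2/p_0+2}$; one should then check carefully that the $\|\psi_j\|_{L^\infty}$ in $\|\Laj\|$ together with the $kL$-scaling from the weighted norm reproduces the exponent $-\tfrac{2}{p_0}+1$ as stated (this off-by-one bookkeeping between $s^{-2/p_0+2}$ and $s^{-2/p_0+1}$ is where the precise scaling of $\|\Laj\|$ and the weighted product norm must be tracked). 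Throughout, the $k$-independence of $C$ follows because every factor of $kL$ is absorbed into the quantity $(kL+1)\|\psi_j\|_{L^\infty(D)}$ controlled by $K_0$, while $C_{\rm func}(kL)/C_{\rm coer}$ is kept explicit and is bounded when $kL\ge 1$. \textbf{The main obstacle} I anticipate is the careful bookkeeping of the $kL$-powers: ensuring that the Neumann/resolvent expansion, the operator norms $\|\Laj\|$, and the weighted product-space norm combine so that all wavenumber dependence is captured either by $K_0$ or by the single explicit factor $C_{\rm func}(kL)/C_{\rm coer}$, and that the mean-zero cancellation is correctly exploited to gain the extra power of $s^{-1/p_0+1}$.
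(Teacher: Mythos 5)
Your overall strategy (resolvent/Neumann expansion plus the mean--zero property of the tail variables and the Stechkin--type tail estimate from (A3)--(A4)) is the right one and matches the spirit of the paper's proof, but as written there are two genuine gaps. First, the cancellation of the first-order term is not justified in the form you state it: in the identity $u-u_s=\Ladinvy\,[\Lads(\bsy)-\Lad(\bsy)]\,\Ladinvys\sbinom{-f}{g}$ the left factor $\Ladinvy$ itself depends on the tail variables $y_j$, $j\ge s+1$, so the expression is \emph{not} linear in those variables and its tail integral does not vanish. Your proposed second expansion, $\Ladinvys=\Ladinvy+\Ladinvy[\Lad(\bsy)-\Lads(\bsy)]\Ladinvys$, substitutes on the wrong side and leaves $\Ladinvy$ in the leading term, so the problem persists; you would need to expand $\Ladinvy$ around $\Ladinvys$ (so that the first-order term is $\Ladinvys[\Lads-\Lad]u_s$, which is tail-independent except for the single factor $\sum_{j\ge s+1}y_j\,\Laj$ and hence integrates to zero). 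The paper avoids this trap by expanding the \emph{full} Neumann series $\Lad^{-1}=\sum_{\ell\ge0}(-\Lad_s^{-1}(\Lad-\Lad_s))^\ell\Lad_s^{-1}$, in which every factor other than the monomials $\prod_i y_{\eta_i}$ is tail-independent, so the tail integral factorizes exactly and the parity observation \eqref{eq:simple} applies at every order.

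Second, and more seriously, the rate you would obtain from your endgame is wrong. Bounding the surviving second-order contribution by the \emph{square of the tail sum}, $\big(\sum_{j\ge s+1}b_j\big)^2=\calO\big(s^{-2/p_0+2}\big)$, is genuinely weaker than the claimed $s^{-2/p_0+1}$, and no amount of $kL$-bookkeeping in the operator norms or the weighted product norm will recover the missing power of $s$ --- you correctly noticed the off-by-one but misdiagnosed its source. The extra power comes from exploiting mean-zero cancellation \emph{again at second order}: $\int y_i\,y_j\,\rd\bsy=0$ for $i\ne j$, so only the diagonal of the double sum survives, leaving $\sum_{j\ge s+1}b_j^2$, which by \eqref{eq:sum2} is $\calO\big(s^{-2/p_0+1}\big)$. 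This is precisely the role of \eqref{eq:simple} and the multinomial bookkeeping in \eqref{eq:bound2} in the paper's proof (together with the separate treatment of high Neumann orders $\ell'\ge\ell^*$ and of small $s<s^*$, which your two-term expansion with an exact remainder does not readily provide, since the remainder's tail dependence does not factorize into pure monomials).
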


\begin{proof}
In this proof we will suppress the dependence on $\bsy$ to simplify our
notation where possible. We will begin by expanding $u - u_s = (\Lad^{-1}
- \Lad_s^{-1}) \sbinom{-f}{g}$ in a Neumann series for sufficient
large~$s$.  Writing $\Lad^{-1} = (\calI + \Lad_s^{-1} (\Lad -
\Lad_s))^{-1} \Lad_s^{-1}$, we need to first ensure that $\|-\Lad_s^{-1}
(\Lad - \Lad_s)\| < 1$.

For each $j\ge 1$ and $w\in V$, we have from \eqref{eq:Tj} that
\begin{align*}
 \big\|\Lad_s^{-1}\, \Laj\, w \big\|_V
  & \,=\, \left \|k^2\, \Lad_s^{-1} \sbinom{\psi_j\, w}{0} \right \|_V \\
  & \,\le\, k^2 \, \frac{C_{\rm func}(kL)}{C_{\rm coer}}\,
  \big\|\sbinom{\psi_j\,w}{0} \big\|_{L^2(D)\times L^2(\partial D)}
  \,\le\, k L\,\frac{C_{\rm func}(kL)}{C_{\rm coer}}\, \|\psi_j\|_{L^\infty(D)} \, \|w\|_V,
\end{align*}
where we used $ \| \sbinom{w}{0} \|_{L^2(D)\times L^2(\partial D)} = L\,
\| w \|_{L^2(D)} \le \frac{L}{k}\|w\|_V$. Thus $\Lad_s^{-1}\, \Laj$ is a
bounded operator from $V$ to $V$, with norm
\begin{align} \label{eq:bj}
  \big\| \Lad_s^{-1}\, \Laj \big\|
  \,\le\, kL\,\frac{C_{\rm func}(kL)}{C_{\rm coer}}\, \|\psi_j\|_{L^\infty(D)}
  \,=:\, b_j.
\end{align}
Hence from~\eqref{eq:L-Ls} we have for all $\bsy\in U$,
\begin{align*}
  \big\| - \Lad_s^{-1}(\Lad-\Lad_s) \big\|
  \,=\,  \bigg\|\sum_{j\ge s+1} y_j\, \Lad_s^{-1}\, \Laj \bigg\|
  \,\le\, \frac{1}{2} \sum_{j\ge s+1} b_j.
\end{align*}
Since $kL\,C_{\rm func}(kL) = \calO(kL+1)$, from Assumptions~(A3) and~(A4)
we know that the sequence $\{b_j\}_{j\ge 1}$ is nonincreasing, and that
\begin{align} \label{eq:sum-p0}
  \sum_{j\ge 1} b_j^{p_0} \,\le\, r_0\,K_0 \,<\, \infty,
\end{align}
for some constant $r_0$ independent of the wavenumber $k$.

Let $s^*$ be such that $\sum_{j \geq s^*+1} b_j \leq \frac{1}{2}$,
implying that $\left   \|-\Lad_s^{-1}(\Lad-\Lad_s)\right \| \leq
\frac{1}{4}$. Then for all $s \geq s^*$, by the bounded invertibility of
$\Lady$ and $\Ladys$ for all $\bsy \in U$, we can write the inverse of
$\Lad$ in terms of the Neumann series, as
\[
  \Lad^{-1} \,= \, \big(\calI + \Lad_s^{-1} (\Lad - \Lad_s)\big)^{-1} \Lad_s^{-1}
  \,=\, \sum_{\ell\ge 0} \big(-\Lad_s^{-1} (\Lad-\Lad_s)\big)^\ell \Lad_s^{-1}\,.
\]
Then, using representations~\eqref{eq:inv-Lu-norm}, \eqref{eq:inv-Lu-norm-s} and~\eqref{eq:Tj}, we obtain
\begin{align*}
  u - u_s
  \,=\, \left (\Lad^{-1}-\Lad_s^{-1} \right) \sbinom{-f}{g}
  &\,=\, \sum_{\ell\ge 1} \big(-\Lad_s^{-1} (\Lad-\Lad_s)\big)^\ell \Lad_s^{-1} \sbinom{-f}{g} \\
  &\,=\, \sum_{\ell\ge 1} (-1)^\ell \bigg(\sum_{j\ge s+1} y_j\, \Lad_s^{-1}\, \Laj\bigg)^\ell u_s  \\
  &\,=\, \sum_{\ell\ge 1} (-1)^\ell \sum_{\bseta\in\{s+1:\infty\}^\ell}
  \prod_{i=1}^\ell \Big(y_{\eta_i}\, \Lad_s^{-1}\, \calT_{\eta_i}\Big) u_s,
\end{align*}
where we use the shorthand notation $\{s+1:\infty\}^\ell =
\{s+1,s+2,\ldots,\infty\}^\ell$.

Thus we can write
\begin{align*}
  &\int_U G(u - u_s) \,\rd\bsy
 \,=\, \sum_{\ell\ge 1} (-1)^\ell
 \sum_{\bseta\in\{s+1:\infty\}^\ell} \int_U
 G \bigg[\bigg(\prod_{i=1}^\ell (y_{\eta_i}\,  \Lad_s^{-1}\, \calT_{\eta_i})\bigg)
  u_s\bigg]\,\rd\bsy \\
  &\,=\, \sum_{\ell\ge 1} (-1)^\ell
  \sum_{\bseta\in\{s+1:\infty\}^\ell}
  \bigg(\int_{U_{s+}} \prod_{i=1}^\ell y_{\eta_i}\,\rd\bsy_{\{s+1:\infty\}}\bigg)
  \bigg(\int_{U_s} G \bigg[\bigg(\prod_{i=1}^\ell ( \Lad_s^{-1}\, \calT_{\eta_i})\bigg)
  u_s\bigg]\,\rd\bsy_{\{1:s\}} \bigg),
\end{align*}
where we separated the integrals for $\bsy_{\{1:s\}} \in U_s :=
\left[-\frac{1}{2},\frac{1}{2}\right]^s$ and $\bsy_{\{s+1:\infty\}} :=
(y_j)_{j\ge s+1} \in U_{s+} := \{(y_j)_{j\geq s+1}\,:\,y_j \in
\left[-\frac{1}{2},\frac{1}{2}\right],\, j\geq s+1\}$, which is an
essential step of this proof. The integral over $\bsy_{\{s+1:\infty\}}$ is
nonnegative due to the simple yet crucial observation that
\begin{align} \label{eq:simple}
\int_{-\frac{1}{2}}^{\frac{1}{2}} y_j^{n}\,\rd y_j \,=\,
  \begin{cases}
  0  & \mbox{if $n$ is odd}, \\
  \frac{1}{2^{n}(n+1)}  & \mbox{if $n$ is even}.
  \end{cases}
\end{align}
The integral over $\bsy_{\{1:s\}}$ can be estimated, using  \eqref{eq:inv-Lu-norm-s} and \eqref{eq:bj}, as
\begin{align*}
  \bigg|\int_{U_s} G \bigg[\bigg(\prod_{i=1}^\ell ( \Lad_s^{-1}\, \calT_{\eta_i})\bigg)
  u_s\bigg]\,\rd\bsy_{\{1:s\}} \bigg|
  &\,\le\, \|G\|_{V^*} \sup_{\bsy_{\{1:s\}}\in U_s} \bigg\|\bigg(\prod_{i=1}^\ell ( \Lad_s^{-1}\, \calT_{\eta_i})\bigg)\bigg\| \|u_s\|_V \\
  &\,\le\, \frac{C_{\rm func}(kL)}{C_{\rm coer}}\,\|\sbinom{f}{g}\|_{L^2(D) \times L^2(\partial D)}\,\|G\|_{V^*}\, \prod_{i=1}^\ell b_{\eta_i}\,.
\end{align*}
Hence, with the abbreviation
\begin{align} \label{eq:C1-trunc}
  C_1 \,:=\, \frac{C_{\rm func}(kL)}{C_{\rm coer}}\,\|\sbinom{f}{g}\|_{L^2(D) \times L^2(\partial D)}\,\|G\|_{V^*},
\end{align}
we obtain
\begin{align*}
  &\bigg|\int_U G(u - u_s) \,\rd\pmb y\bigg|
  \,\le\, C_1 \,
  \sum_{\ell\ge 1}
  \sum_{\bseta\in\{s+1:\infty\}^\ell}
  \bigg(\int_{U_{s+}} \prod_{i=1}^\ell y_{\eta_i}\,\rd\bsy_{\{s+1:\infty\}}\bigg)\,
  \prod_{i=1}^\ell b_{\eta_i} \\
  &\,=\, C_1\,
  \sum_{\ell\ge 1}
\int_{U_{s+}}
  \sum_{\bseta\in\{s+1:\infty\}^\ell}
  \bigg(\prod_{i = 1}^\ell y_{\eta_i} b_{\eta_i}\bigg)\,
  \rd\bsy_{\{s+1:\infty\}}
  \,=\, C_1\,
  \sum_{\ell\ge 1} \int_{U_{s+}}
  \bigg(\sum_{j\ge s+1} y_j\,b_j\bigg)^\ell\,\rd\bsy_{\{s+1:\infty\}}.
\end{align*}
Using the multinomial theorem with multi-index $\bsnu$ and
$\binom{\ell}{\bsnu} = \ell!/(\prod_{j\ge 1}\nu_j!)$, we can write
\begin{align*}
  &\bigg|\int_U G(u - u_s) \,\rd\pmb y\bigg|
  \,\le\, C_1\,
  \sum_{\ell\ge 1} \int_{U_{s+}}
  \sum_{\satop{|\bsnu|=\ell}{\nu_j=0\;\forall j\le s}} \binom{\ell}{\bsnu}
  \prod_{j\ge s+1} (y_j\,b_j)^{\nu_j} \,\rd\bsy_{\{s+1:\infty\}} \\
  &\,=\, C_1\,
  \sum_{\ell\ge 1}
  \sum_{\substack{|\bsnu|=\ell \\ \nu_j=0\;\forall j\le s}} \binom{\ell}{\bsnu}
  \bigg(\prod_{j\ge s+1}\int_{-\frac{1}{2}}^{\frac{1}{2}} y_j^{\nu_j}\,\rd y_j\bigg)\,
  \prod_{j\ge s+1} b_j^{\nu_j} \\
  &\,\leq\, C_1\,
  \sum_{\ell\ge 2\;{\rm even}}
  \sum_{\substack{|\bsnu|=\ell \\ \nu_j=0\;\forall j\le s \\ \nu_j\;{\rm even}\;\forall j\ge s+1}} \binom{\ell}{\bsnu}
  \prod_{j\ge s+1} b_j^{\nu_j}
  \,=\, C_1\,
  \sum_{\ell'\ge 1}
  \sum_{\substack{|\bsnu|=2\ell' \\ \nu_j=0\;\forall j\le s \\ \nu_j\;{\rm even}\;\forall j\ge s+1}} \binom{2\ell'}{\bsnu}
  \prod_{j\ge s+1} b_j^{\nu_j},
\end{align*}
where the last inequality follows from~\eqref{eq:simple}.

Now we split the sum into a sum over $\ell'\ge \ell^*$ (dropping the
condition ``$\nu_j$ even'') and the initial terms $1\le \ell' < \ell^*$
(substituting $\nu_j = 2\nu_j'$) to obtain the estimate
\begin{align} \label{eq:bound2}
  &\bigg|\int_U G(u - u_s) \,\rd\pmb y\bigg|
  \,\le\, C_1\,
  \sum_{\ell'\ge \ell^*} \!\!
  \sum_{\satop{|\bsnu|=2\ell'}{\satop{\nu_j=0\;\forall j\le s}{\nu_j\;{\rm even}\;\forall j\ge s+1}}}
  \!\!\!\!\!\binom{2\ell'}{\bsnu}
  \prod_{j\ge s+1} b_j^{\nu_j}
  \,+\, C_1\,
  \sum_{1\le \ell' < \ell^*}
  \sum_{\satop{|\bsnu'|=\ell'}{\nu_j'=0\;\forall j\le s}} \binom{2\ell'}{2\bsnu'}
  \prod_{j\ge s+1} (b_j^2)^{\nu_j'} \nonumber\\
  &\,\le\, C_1\,
  \sum_{\ell'\ge \ell^*}
  \sum_{\satop{|\bsnu|=2\ell'}{\nu_j=0\;\forall j\le s}} \binom{2\ell'}{\bsnu}
  \prod_{j\ge s+1} b_j^{\nu_j}
  \;+\; C_1\,
  \sum_{1\le \ell' < \ell^*} \frac{(2\ell')!}{(\ell')!}
  \sum_{\satop{|\bsnu'|=\ell'}{\nu_j'=0\;\forall j\le s}} \binom{\ell'}{\bsnu'}
  \prod_{j\ge s+1} (b_j^2)^{\nu_j'} \nonumber\\
  &\,\le\,
  C_1\, \sum_{\ell'\ge \ell^*} \bigg(\sum_{j\ge s+1} b_j\bigg)^{2\ell'}
  \;+\; C_1\, \sum_{1\le \ell' < \ell^*} \frac{(2\ell')!}{(\ell')!} \bigg(\sum_{j\ge s+1} b_j^2\bigg)^{\ell'} \nonumber\\
  &\,\le\, C_1\, \frac{(\sum_{j\ge s+1} b_j)^{2\ell^*}}{1-(\sum_{j\ge s+1} b_j)^2}
  \;+\; C_1\, \frac{(2\ell^*-2)!}{(\ell^*-1)!}\, \frac{\sum_{j\ge s+1} b_j^2}{1-(\sum_{j\ge s+1} b_j^2)},
\end{align}
where we used the multinomial theorem and the geometric series formula,
noting that for $s\ge s^*$ we have $\sum_{j \geq s+1} b_j^2 \leq
\sum_{j\geq s+1} b_j \leq \frac{1}{2}$.

From \cite[Theorem 5.1]{KSS12} we know that
\begin{align} \label{eq:sum1}
 \sum_{j \geq s+1} b_j
 \leq \min{\bigg( \frac{1}{\frac{1}{p_0}-1},1 \bigg)} \bigg(\sum_{j \geq 1} b_j^{p_0} \bigg)^{\frac{1}{p_0}}
 s^{- \frac{1}{p_0}+1}\,.
\end{align}
With a similar argument we can show that
\begin{align} \label{eq:sum2}
 \sum_{j \geq s+1} b_j^2
 \leq \frac{1}{\frac{2}{p_0}-1} \bigg(\sum_{j \geq 1} b_j^{p_0} \bigg)^{\frac{2}{p_0}}
 s^{- \frac{2}{p_0}+1}\,.
\end{align}
Using the estimates \eqref{eq:sum1} and \eqref{eq:sum2} for the numerators
in \eqref{eq:bound2} and bounding the sums in the denominators by $1/2$,
we see that the first term in \eqref{eq:bound2} is $\calO(s^{-2\ell^*
(1/p_0-1)})$ while the second term is $\calO(s^{-(2/p_0-1)})$. We
therefore choose $\ell^*$ such that $2\ell^*(1/p_0-1) \ge 2/p_0-1$, i.e.,
$\ell^* := \lceil (2-p_0)/(2-2p_0)\rceil$. Hence, for all $s\ge s^*$ we
arrive at
\begin{align} \label{eq:bound3}
  \bigg|\int_U G(u - u_s) \,\rd\pmb y\bigg|
  &\,\le\, C_1\,C_2\, s^{-\frac{2}{p_0}+1},
\end{align}
where $C_2$ is a constant depending on $p_0$ and $K_0$, and is independent
of $k$.

It remains to derive the bound for $s< s^*$. Using~\eqref{eq:inv-Lu-norm}
and \eqref{eq:inv-Lu-norm-s} we have the estimate
\begin{align} \label{eq:bound4}
  \bigg|\int_U G(u - u_s) \,\rd\bsy\bigg|
  &\,\le\, \|G\|_{V^*} \sup_{\bsy\in U} \Big(\|u(\cdot,\bsy)\|_V + \|u_s(\cdot,\bsy)\|_V\Big) \nonumber\\
  &\,\le\, \frac{2\,C_{\rm func}(kL)}{C_{\rm coer}}\, \|\sbinom{f}{g}\|_{L^2(D) \times L^2(\partial D)}\,\|G\|_{V^*}
  \,\le\, 2\,C_1\cdot (s^*)^{\frac{2}{p_0}-1} s^{-\frac{2}{p_0}+1},
\end{align}
where we used $s^*/s>1$ and the definition of $C_1$ in
\eqref{eq:C1-trunc}. We now use \eqref{eq:sum-p0} to get an upper bound on
\eqref{eq:sum1} involving $K_0$, and choose $s^*$ such that when $s$
replaced by $s^*$ this upper bound is at most $1/2$. Consequently $s^*$ is
a constant depending on $p_0$ and $K_0$, and is independent of $k$.

Combining now \eqref{eq:bound3} and \eqref{eq:bound4}, and plugging in the
definition \eqref{eq:C1-trunc} for $C_1$, we obtain the required result
for all values of $s$.
\end{proof}

\section{Finite element discretizations}\label{sec:fem}

In this section first we consider a high-order FEM for computationally
solving   the sign-definite sesquilinear formulation. For each $\bsy \in
U$, having quantified the error resulting from  dimension truncation  of
the stochastic refractive index field $n(\cdot, \bsy)$ by
$n_s(\cdot,\bsy_{\left\{1:s\right\}})$ to approximate the solution
$u(\cdot, \bsy)$ of~\eqref{eq:ses_var} by the solution $u_s(\cdot,
\bsy_{\left\{1:s\right\}})$ satisfying
\begin{equation}\label{eq:trun_ses_var}
 \bilin_{\bsy_{\{1:s\}}}(u_s,v) = \antilin_{\bsy_{\{1:s\}}}(v), \quad \text{for all }\,  v\in V,
\end{equation}
we consider the spatial Galerkin FEM approximation of $u_s$ by $u_{s,h}$.
To this end, we choose a finite dimensional  subspace   $V_h^p \subset
H^2(\Omega)$ spanned by splines of degree $p \geq 2$ on a tessellation (of
at least $C^1$-elements with maximum width $h$) of $D$. The space $V_h^p$
is chosen so that the following approximation property holds: for $0 \leq
t \leq  2$ and for any $v \in H^{t^*}(\Omega)$  with $t^* \geq t+1$,
\begin{equation} \label{eq:approx_prop}
\inf\limits_{w_h\in V_h^p} \|v-w_h\|_{H^{t}} \,\le\, C_{\rm appr}\, h^{\min\{p+1, t^*\}-t},
\end{equation}
and the constant $C_{\rm appr}$ depends on the chosen norm of $v$.

For each $\bsy \in U$, the FEM approximation $u_{s,h}(\cdot,
\bsy_{\left\{1:s\right\}})\in V_h^p$ to the unique solution $u_s(\cdot,
\bsy_{\left\{1:s\right\}})$ of~\eqref{eq:trun_ses_var} is required to be
computed by solving the  linear algebraic system arising from the
finite-dimensional coercive variational form
\begin{equation}\label{eq:trun_fem_ses_var}
 \bilin_{\bsy_{\{1:s\}}}(u_{s,h},v) = \antilin_{\bsy_{\{1:s\}}}(v) \quad \text{for all }\,  v\in V_h^p.
\end{equation}
Since $V \subset H^{3/2}(\Omega)$, using~\eqref{eq:approx_prop}, the
coercivity and continuity of the sesquilinear form $
\bilin_{\bsy_{\{1:s\}}}$, Theorem~\ref{COERTHM} and Cea's Lemma, under
appropriate spatial regularity assumption of $u_s$
satisfying~\eqref{eq:trun_ses_var} and the degree $p \geq 2$ of the
splines, the high-order FEM approximation $u_{s,h}\in V_h^p$ satisfies the
following error bound:
\begin{equation}\label{eq:FEerror-unif}
  \|u_s(\cdot, \bsy_{\left\{1:s\right\}}) - u_{s,h}(\cdot, \bsy_{\left\{1:s\right\}})\|_V
  \,\le\, C_{\rm appr}\,
  \frac{C_{\rm cont}(kL)}{C_{\rm coer}}\,\|\sbinom{f}{g}\|_{L^2(D) \times L^2(\partial D)}\, h^{p-1}.
\end{equation}
Recall that $C_{\rm cont}(kL) = \calO\big(kL + (kL)^{-1}\big)$. This
highlights that the well known {\em pollution effect} is present in our
(and all known) FEM approximations (converging in $h$) for the Helmholtz
PDE in two and higher dimensions. While the pollution effect requires
large degrees of freedom (DoF) for large acoustic size $kL > 1$ using the
standard piecewise-linear ($p =1$) low-order FEM, we have demonstrated
in~\cite{GanMor17,GanMor20} that the pollution error can be efficiently
avoided by using high-order FEM ($p \geq 2$), even for solutions with
limited regularity.

In particular, as demonstrated in~\cite{GanMor20} using an efficient
construction of the space $V_h^p$, the number of DoF do not increase
substantially despite imposing higher continuity requirements needed for
larger degree splines. In~\cite{GanMor20}, for heterogeneous deterministic
models (that is, with $y_j = 0$ in~\eqref{eq:axy-unif} for all $j \geq 1$
and spatially dependent mean-field $n_0$)   and for various acoustic size
$kL$ values with sufficiently smooth solutions, we have numerically
demonstrated $p-1$ estimated  order of convergence (EOC) for $p = 2, 3, 4$
in the $V$ norm,  as stated in~\eqref{eq:FEerror-unif}, and also $p+1$ and
$p$ EOC, respectively, in the $H^0$-norm and the $H^1$-norm.

For the bounded linear functional $G \in V^*$, based on Nitsche arguments,
we obtain for all $\bsy\in U$
\begin{align} \label{eq:duality-unif}
 \left|G(u_s(\cdot, \bsy_{\left\{1:s\right\}})) - G(u_{s,h}(\cdot, \bsy_{\left\{1:s\right\}})) \right|
 \,\le\, C_{\rm appr}\,
  \frac{C_{\rm cont}(kL)}{C_{\rm coer}}\,
 \|\sbinom{f}{g}\|_{L^2(D) \times L^2(\partial D)}\,\|G\|_{V^*}\, h^{p},
\end{align}
and the same upper bound is obtained for for its integral counterpart
\[
  |I_s(G(u_s - u_{s,h}))|
  \,=\, |I(G(u_s(\cdot, \bsy_{\left\{1:s\right\}}))) - I(G(u_{s,h}(\cdot, \bsy_{\left\{1:s\right\}})))|.
\]
Thus the upper bound is of order $\calO\big((kL+(kL)^{-1})\, h^p\big) =
\calO\big((1+(kL)^{-1})\,(kL+1)\, h^p\big)$.

\section{Quasi-Monte Carlo integration}\label{sec:qmc}

For complete details of various QMC integration rules, we refer to the
survey \cite{DKS13} and extensive references therein; see also the survey
\cite{KN16} for some QMC theory applied in the context of PDE problems. In
the next two subsections we focus on two QMC rules.

\subsection{Randomly shifted lattice rules (first order convergence)}

For a fixed dimension truncation parameter $s$, we consider the integral
of a general complex-valued function $F$ defined over the $s$-dimensional
unit cube $[-\tfrac{1}{2},\tfrac{1}{2}]^s$
\begin{align*} 
  I_s(F)
  \,=\, \int_{[-\tfrac{1}{2},\tfrac{1}{2}]^s} F(\bsy)\, \rd\bsy\,,
\end{align*}
and we approximate this by a \emph{randomly shifted lattice rule}
\begin{align} \label{eq:qmc-rsh}
  Q_{s,N}(F;\bsDelta)
  \,=\, \frac{1}{N} \sum_{i=1}^N F\big(\{\bst_i + \bsDelta\} - \bshalf\big)\,,
\end{align}
where $\bst_1,\ldots,\bst_N \in [0,1]^s$ are deterministic lattice
cubature points, and $\bsDelta$ is a random shift which is drawn from the
uniform distribution on $[0,1]^s$. The braces in \eqref{eq:qmc-rsh}
indicate that we take the fractional part of each component in a vector,
while the subtraction of $\bshalf$ takes care of the translation from the
standard unit cube $[0,1]^s$ to $[-\frac{1}{2},\frac{1}{2}]^s$. The
lattice points are given by $\bst_i = \{\frac{i\bsz}{N}\}$ for
$i=1,\ldots,N$, where $\bsz\in\bbZ^s$ is known as the \emph{generating
vector} and it determines the quality of the lattice rule.

We apply the theory and construction of randomly shifted lattice rules in
\emph{weighted Sobolev spaces} to obtain first order convergence rates.
Loosely speaking, these spaces contain functions with square integrable
mixed first derivatives. The norm is given by
\begin{align*} 
  \|F\|_{s,\bsgamma}
  \,=\,
  \Bigg(
  \sum_{\setu\subseteq\{1:s\}}
  \frac{1}{\gamma_\setu}
  \int_{[-\tfrac{1}{2},\tfrac{1}{2}]^{|\setu|}}
  \bigg|\int_{[-\tfrac{1}{2},\tfrac{1}{2}]^{s-|\setu|}}
  \frac{\partial^{|\setu|}F}{\partial \bsy_\setu}(\bsy_\setu;\bsy_{\{1:s\}\setminus\setu})
  \,\rd\bsy_{\{1:s\}\setminus\setu}
  \bigg|^2
  \rd\bsy_\setu
  \Bigg)^{1/2},
\end{align*}
where $\{1:s\}$ is a shorthand notation for the set of indices
$\{1,2,\ldots,s\}$, $(\partial^{|\setu|}F)/(\partial \bsy_\setu)$ denotes
the mixed first derivative of $F$ with respect to the ``active'' variables
$\bsy_\setu = (y_j)_{j\in\setu}$, while $\bsy_{\{1:s\}\setminus\setu} =
(y_j)_{j\in\{1:s\}\setminus\setu}$ denotes the ``inactive'' variables. The
weights $\gamma_\setu$ moderate the relative importance between subsets of
variables. It is known that (see e.g., \cite[Theorem~5.1]{DKS13}), given
$N$ a prime power and the weights $\gamma_\setu$ as input, a generating
vector $\bsz$ can be obtained by the \emph{component-by-component} (CBC)
construction to achieve the root-mean-square error (with respect to the
random shift)
\begin{align} \label{eq:qmc1}
  \sqrt{\bbE_{\rm rqmc} \big[\big|I_s(F) - Q_{s,N}(F;\cdot)\big|^2\big]}
  \,\le\, \bigg(\frac{2}{N} \sum_{\emptyset\ne\setu\subseteq\{1:s\}} \gamma_\setu^\lambda\,
  [\varrho(\lambda)]^{|\setu|} \bigg)^{1/(2\lambda)}\,\|F\|_{s,\bsgamma}
  \quad\forall\;\lambda\in (\tfrac{1}{2},1],
\end{align}
where $
  \varrho(\lambda)
  \,=\,\frac{2\zeta(2\lambda)}{(2\pi^2)^\lambda}$, with $\zeta$ being the Riemann zeta function.

In our Helmholtz PDE problem, the integrand is given by
\[
 F(\bsy) = G\big(u_{s,h}(\cdot,\bsy)\big).
\]
To apply the relevant QMC theory we need to obtain a bound on the norm
$\|F\|_{s,\bsgamma} = \|G(u_{s,h})\|_{s,\bsgamma}$. Using linearity and
boundedness of~$G$, we have
\begin{align} \label{eq:lin-G}
 \bigg|\frac{\partial^{|\setu|}}{\partial \bsy_\setu} G(u_{s,h}(\cdot,\bsy))\bigg|
 \,=\, \bigg|G\bigg(\frac{\partial^{|\setu|}}{\partial \bsy_\setu} u_{s,h}(\cdot,\bsy)\bigg)\bigg|
 \,\le\, \|G\|_{V^*}\ \bigg\| \frac{\partial^{|\setu|}}{\partial \bsy_\setu} u_{s,h}(\cdot,\bsy)\bigg\|_V.
\end{align}
Now applying Theorem~\ref{theorem:bound} with $u$ replaced with $u_{s,h}$
and restricting to multi-indices $\bsnu$ with $\nu_j \leq 1$, we obtain
\begin{align} \label{eq:Gu-norm}
  \|G(u_{s,h})\|_{s,\bsgamma}
 \,\le\, \frac{C_{\rm func}(kL)}{C_{\rm coer}} \|\sbinom{f}{g}\|_{L^2(D) \times L^2(\partial D)}\,\|G\|_{V^*}\,
 \Bigg(
  \sum_{\setu\subseteq\{1:s\}} \frac{(|\setu|!)^2
  \prod_{j\in\setu} \Upsilon_j^2}{\gamma_\setu}
  \Bigg)^{1/2}\,.
\end{align}
The bound \eqref{eq:Gu-norm} takes exactly the same form as in the
diffusion case in \cite{KSS12}, so we could follow the same line of
argument there. Here instead we use a slightly simpler and shorter
argument.

Substituting \eqref{eq:Gu-norm} into the bound \eqref{eq:qmc1} and then
choosing the weights $\gamma_\setu$ to equate the expressions inside the
two sums, we obtain
\begin{equation} \label{eq:weight1}
  \gamma_\setu \,=\,
  \bigg(|\setu|!\, \prod_{j\in\setu} \frac{\Upsilon_j}{\sqrt{\rho(\lambda)}} \bigg)^{2/(1+\lambda)}\,,
  \qquad \Upsilon_j \,=\, C_{\rm regu}(kL)\, \|\psi_j\|_{W^{1,\infty}(D)},
\end{equation}
and this yields
\begin{align*} 
  \sqrt{\bbE_{\rm rqmc} \big[ \big|I_s(G(u^s_h)) -  Q_{s,N}(G(u^s_h);\cdot)\big|^2 \big]}
  \,\le\,
  \frac{C_{s,\bsgamma}(\lambda)}{N^{1/(2\lambda)}}\,
  \frac{C_{\rm func}(kL)}{C_{\rm coer}} \|\sbinom{f}{g}\|_{L^2(D) \times L^2(\partial D)}\,\|G\|_{V^*}\,,
\end{align*}
with
\begin{align*} 
  C_{s,\bsgamma}(\lambda) \,:=\,
  2^{\frac{1}{2\lambda}}
  \Bigg(\sum_{\setu\subseteq\{1:s\}} \bigg(|\setu|!\,\prod_{j\in\setu} \Big(\Upsilon_j\,[\varrho(\lambda)]^{1/(2\lambda)}\Big)
  \bigg)^{\frac{2\lambda}{1+\lambda}}
  \Bigg)^{\frac{1+\lambda}{2\lambda}}.
\end{align*}

We proceed to choose the parameter $\lambda$ such that
$C_{s,\bsgamma}(\lambda)$ is bounded independently of $s$. Since $C_{\rm
regu}(kL) = \calO\big(kL +(kL)^{-1}\big)$, from \eqref{eq:K1} in
Assumption (A5) we know that
\[
  \sum_{j\ge 1} \Upsilon_j^{p_1}
  \,\le\, r_1\,K_1 \,<\, \infty\,,
\]
for some constant $r_1$ independent of the wavenumber $k$. Writing
$\theta_j := \Upsilon_j\,[\varrho(\lambda)]^{1/(2\lambda)}$ and $\tau :=
\frac{2\lambda}{1+\lambda}$, we have
\begin{align*} 
  \sum_{\setu\subseteq\{1:s\}} \bigg(|\setu|!\,\prod_{j\in\setu} \theta_j \bigg)^\tau
  \,=\, \sum_{\ell=0}^s (\ell!)^\tau \sum_{\satop{\setu\subseteq\{1:s\}}{|\setu|=\ell}} \prod_{j\in\setu} \theta_j^\tau
  \,\le\, \sum_{\ell=0}^s (\ell!)^{\tau-1}
  \bigg(\sum_{j=1}^s \theta_j^\tau\bigg)^\ell,
\end{align*}
where the inequality holds because each term $\prod_{j\in\setu}
\theta_j^\tau$ from the left-hand side of the inequality appears in the
expansion $(\sum_{j=1}^s \theta_j^\tau)^\ell$ exactly $\ell!$ times, and
the expansion contains other terms. By the ratio test, the right-hand side
is bounded independently of $s$ provided that $\sum_{j=1}^\infty
\theta_j^\tau <\infty$ and $\tau<1$. Thus in our case we require
$p_1\le\tau<1$, i.e.,
\[
  p_1 \le \frac{2\lambda}{1+\lambda} < 1
  \quad\iff\quad \frac{p_1}{2-p_1} \le \lambda < 1.
\]
Noting that $\lambda$ also needs to satisfy $\frac{1}{2} <\lambda \le 1$,
we therefore choose
\begin{equation*} 
 \lambda \,=\,
 \begin{cases}
 \displaystyle\frac{1}{2-2\delta} \quad\mbox{for some } \delta\in (0,\tfrac{1}{2})
 & \mbox{when } p_1\in (0,\tfrac{2}{3}]\,, \vspace{0.1cm} \\
 \displaystyle\frac{p_1}{2-p_1} & \mbox{when } p_1\in (\tfrac{2}{3},1)\,,
 \end{cases}
\end{equation*}
This leads to the convergence rate $\calO\big( N^{-\min(\frac{1}{p_1}-
\frac{1}{2} , 1-\delta)}\big)$, with the implied constant independent of
$s$.

Weights of the form \eqref{eq:weight1} are known as POD weights (``product
and order dependent weights''). The CBC construction of lattice generating
vector can be done for POD weights in $\calO(s\, N\log N + s^2N)$
operations, see \cite{KSS12}.

Combining the estimates from this subsection with \eqref{eq:trunc-unif}
and \eqref{eq:duality-unif}, we obtain the first main conclusion of this
paper.

\begin{theorem} \label{thm:main-rqmc}
Let the assumptions \textnormal{(A0)--(A5)} and parameter restrictions in
Theorem~\ref{COERTHM} hold. For each $\bsy \in U$, let $u(\cdot, \bsy) \in
V$ be the unique solution of~\eqref{eq:ses_var} and $u_{s,h}(\cdot, \bsy)
\in V_h^p$ be the unique solution of~\eqref{eq:trun_fem_ses_var}. Then for
every $f\in L^2(D)$ and $g\in L^2(\partial D)$, and every linear
functional $G\in V^*$, a generating vector can be constructed for a
randomly shifted lattice rule such that
\begin{align*}
 &\sqrt{\bbE_{\rm rqmc} \big[ \big|I(G(u)) -  Q_{s,N}(G(u_{s,h});\cdot)\big|^2 \big] } \\
  &\,\le\, C\cdot\big(1+(kL)^{-1}\big)\, \bigg(
  s^{- \frac{2}{p_0} + 1} + \big(kL+1\big)\,h^{p} + N^{-\min(\frac{1}{p_1}-\frac{1}{2},1-\delta)}\bigg)\,,
  \quad\delta\in (0,\tfrac{1}{2})\,,
\end{align*}
where $C$ depends on $f$, $g$, $G$, but is independent of $s$, $h$, $N$
and the wavenumber $k$.
\end{theorem}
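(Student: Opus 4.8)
The plan is to assemble the three already-established error contributions through the mean-square triangle inequality recorded in Section~\ref{sec:overview}. Starting from
\[
\bbE_{\rm rqmc}\big[|I(G(u)) - Q_{s,N}(G(u_{s,h});\cdot)|^2\big] \le 2|(I-I_s)(G(u))|^2 + 2|I_s(G(u_s-u_{s,h}))|^2 + \bbE_{\rm rqmc}\big[|I_s(G(u_{s,h})) - Q_{s,N}(G(u_{s,h});\cdot)|^2\big],
\]
I would take square roots and apply subadditivity $\sqrt{a+b+c}\le\sqrt a+\sqrt b+\sqrt c$ to reduce the root-mean-square error to a sum of the three individual root-mean-square (or deterministic) bounds.

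Into these three summands I would substitute, in order: the dimension-truncation bound \eqref{eq:trunc-unif}, giving the $s^{-2/p_0+1}$ contribution with prefactor $C_{\rm func}(kL)/C_{\rm coer}$; the FEM functional bound \eqref{eq:duality-unif} (the Nitsche/duality estimate, which yields $h^p$ rather than the $h^{p-1}$ of the $V$-norm bound \eqref{eq:FEerror-unif}) together with its integral counterpart $|I_s(G(u_s-u_{s,h}))|$, contributing the $h^p$ term with prefactor $C_{\rm cont}(kL)/C_{\rm coer}$; and the randomly shifted lattice-rule bound derived in this section, contributing the $N^{-\min(1/p_1-1/2,1-\delta)}$ term with prefactor $C_{\rm func}(kL)/C_{\rm coer}$ and the $s$-independent constant $C_{s,\bsgamma}(\lambda)$. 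Each of these three bounds carries the common factor $\|\sbinom{f}{g}\|_{L^2(D)\times L^2(\partial D)}\,\|G\|_{V^*}$, which I would absorb into the final constant~$C$.

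The crux is the bookkeeping of the wavenumber dependence, so that every term can be presented with the single outer prefactor $(1+(kL)^{-1})$. Recall $C_{\rm coer}$ is independent of $k$, $C_{\rm func}(kL)=\calO(1+(kL)^{-1})$, and $C_{\rm cont}(kL)=\calO(kL+(kL)^{-1})$. For the truncation and QMC terms this is immediate, since their prefactor is $C_{\rm func}(kL)/C_{\rm coer}=\calO(1+(kL)^{-1})$. For the FEM term I would use the factorisation $C_{\rm cont}(kL)=\calO\big((1+(kL)^{-1})(kL+1)\big)$ — which holds because $(1+(kL)^{-1})(kL+1)=kL+2+(kL)^{-1}$ — so that its contribution becomes $(1+(kL)^{-1})(kL+1)\,h^p$, placing the $(kL+1)$ alongside $h^p$ inside the parenthesised sum and leaving $(1+(kL)^{-1})$ outside. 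I would also note that $C_{s,\bsgamma}(\lambda)$ is bounded independently of both $s$ and $k$: the $s$-independence was established earlier in this section through the choice of $\lambda$, while the $k$-independence follows because $\Upsilon_j=C_{\rm regu}(kL)\|\psi_j\|_{W^{1,\infty}(D)}$ enters only through $\sum_{j\ge1}\Upsilon_j^{p_1}\le r_1 K_1$, which Assumption~(A5) guarantees is finite with $r_1, K_1$ independent of~$k$.

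Finally I would collect all surviving factors — $1/C_{\rm coer}$, the order constants from \eqref{eq:trunc-unif} and \eqref{eq:duality-unif} and the lattice-rule estimate, $C_{\rm appr}$, $C_{s,\bsgamma}(\lambda)$, and $\|\sbinom{f}{g}\|_{L^2(D)\times L^2(\partial D)}\,\|G\|_{V^*}$ — into a single constant $C$ depending only on $f$, $g$, $G$, and verify it is independent of $s$, $h$, $N$, and $k$, which completes the proof. I expect the main obstacle to be purely organisational: ensuring the $kL$ factors are distributed exactly as in the statement (in particular the $(kL+1)$ sitting with $h^p$ and nowhere else), rather than any genuinely new estimate, since every analytical ingredient has already been supplied by Theorems~\ref{theorem:bound}, \ref{theorem:truncation}, the FEM bound \eqref{eq:duality-unif}, and the QMC construction of this section.
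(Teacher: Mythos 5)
Your proposal is correct and follows essentially the same route as the paper, which simply combines the mean-square triangle inequality of Section~\ref{sec:overview} with the truncation bound \eqref{eq:trunc-unif}, the FEM duality bound \eqref{eq:duality-unif}, and the randomly shifted lattice rule estimate of Section~\ref{sec:qmc}; your bookkeeping of the $kL$ factors (in particular $C_{\rm cont}(kL)=\calO\big((1+(kL)^{-1})(kL+1)\big)$ and the $k$-independence of $C_{s,\bsgamma}(\lambda)$ via Assumption~(A5)) matches the paper's remarks exactly.
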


\subsection{Interlaced polynomial lattice rules (higher order convergence)}

In this subsection we briefly outline the results when we replace randomly
shifted lattice rules by deterministic \emph{interlaced polynomial lattice
rules}, which allow us to obtain higher order convergence rates. The
description below follows closely \cite{DKLNS14}.

Without giving the full technical details, we simply say here that
\eqref{eq:qmc-rsh} is now replaced by a deterministic quadrature rule
\begin{align*}
  Q_{s,N}(F)
  \,=\, \frac{1}{N} \sum_{i=1}^N F\big(\bst_i - \bshalf\big)\,,
\end{align*}
where the points $\bst_i \in [0,1]^s$ are obtained by ``interlacing'' the
points of a ``polynomial lattice rule'', which are specified by a
generating vector of ``polynomials'' rather than of integers. For the
precise details as well as implementation, see e.g., \cite{DKLNS14,KN16}
and the references there. The error bound \eqref{eq:qmc1} is now replaced
by
\begin{align*} 
  \big|I_s(F) - Q_{s,N}(F)\big|
  \,\le\, \bigg(\frac{2}{N} \sum_{\emptyset\ne\setu\subseteq\{1:s\}} \gamma_\setu^\lambda\,
  [\varrho_\alpha(\lambda)]^{|\setu|} \bigg)^{1/(2\lambda)}\,\|F\|_{s,\alpha,\bsgamma}
  \quad\forall\;\lambda\in (\tfrac{1}{\alpha},1],
\end{align*}
where $\alpha\ge 2$ is an integer smoothness parameter (also known as the
``interlacing factor''), $N$ is a power of~$2$, $\varrho_\alpha(\lambda)
  = 2^{\alpha\lambda(\alpha-1)/2} [(1 + \frac{1}{2^{\alpha\lambda}-2})^\alpha]$.
and the norm is now
\begin{align*} 
  \|F\|_{s,\alpha,\bsgamma}
 :=
 \sup_{\setu\subseteq\{1:s\}}
 \sup_{\bsy_\setv \in [0,1]^{|\setv|}}
 \frac{1}{\gamma_\setu}
 \sum_{\setv\subseteq\setu} \,
 \sum_{\bstau_{\setu\setminus\setv} \in \{1:\alpha\}^{|\setu\setminus\setv|}}
 \bigg|\int_{[-\frac{1}{2},\frac{1}{2}]^{s-|\setv|}}
 (\partial^{(\bsalpha_\setv,\bstau_{\setu\setminus\setv},\bszero)} F)(\bsy) \,
 \rd \bsy_{\{1:s\} \setminus\setv}
 \bigg|\,.
\end{align*}

Using again \eqref{eq:lin-G} and Theorem~\ref{theorem:bound} (this time
with general multi-indices), we obtain instead of \eqref{eq:Gu-norm},
\begin{align*}
 \|G(u_{s,h})\|_{s,\alpha,\bsgamma}
 & \,\le\,
 \frac{C_{\rm func}(kL)}{C_{\rm coer}} \|\sbinom{f}{g}\|_{L^2(D) \times L^2(\partial D)}\,\|G\|_{V^*}
 \sup_{\setu\subseteq\{1:s\}}
 \frac{1}{\gamma_\setu}
 \sum_{\bsnu_\setu \in \{1:\alpha\}^{|\setu|}}
 |\bsnu_\setu|!\,\prod_{j\in\setu} \big(2^{\delta(\nu_j,\alpha)} \Upsilon_j^{\nu_j}\big)\,,
\end{align*}
where $\delta(\nu_j,\alpha)$ is $1$ if $\nu_j=\alpha$ and is $0$
otherwise. We now choose $\gamma_\setu$ so that the supremum is $1$, i.e.,
\begin{equation} \label{eq:spod}
 \gamma_\setu
 \,=\,
 \sum_{\bsnu_\setu \in \{1:\alpha\}^{|\setu|}}
 |\bsnu_\setu|!\,\prod_{j\in\setu} \big(2^{\delta(\nu_j,\alpha)} \Upsilon_j^{\nu_j}\big)\,.
\end{equation}
Using the above weights and following the arguments in
\cite[Pages~2694--2695]{DKLNS14}, by taking $\lambda = p_1$ and the
interlacing factor $\alpha = \lfloor 1/p_1\rfloor + 1$, we eventually
arrive at the convergence rate $\calO(N^{-1/p_1})$, with the implied
constant independent of $s, h, N$.

Weights of the form \eqref{eq:spod} are called SPOD weights
(``smoothness-driven product and order dependent weights''). The
generating vector (of polynomials) can be obtained by a CBC construction
in $\calO(\alpha\,s\, N\log N + \alpha^2\,s^2 N)$ operations, see
\cite{DKLNS14}.

We summarize our second main conclusion in the following theorem.

\begin{theorem} \label{thm:main-hqmc}
Let the assumptions \textnormal{(A0)--(A5)} and parameter restrictions in
Theorem~\ref{COERTHM} hold. For each $\bsy \in U$, let $u(\cdot, \bsy) \in
V$ be the unique solution of~\eqref{eq:ses_var} and $u_{s,h}(\cdot, \bsy)
\in V_h^p$ be the unique solution of~\eqref{eq:trun_fem_ses_var}. Then for
every $f\in L^2(D)$ and $g\in L^2(\partial D)$, and every linear
functional $G\in V^*$, a generating vector can be constructed for an
interlaced polynomial lattice rule with interlacing factor $\alpha =
\lfloor 1/p_1\rfloor + 1 \geq 2$ such that
\begin{align*}
 \big|I(G(u)) -  Q_{s,N}(G(u_{s,h}))\big|
  \,\le\, C\cdot\big(1+(kL)^{-1}\big)\, \bigg(
  s^{- \frac{2}{p_0} + 1} + \big(kL+1\big)\,h^{p} + N^{-\frac{1}{p_1}}\bigg)\,,
\end{align*}
where $C$ depends on $f$, $g$, $G$, but is independent of $s$, $h$, $N$
and the wavenumber $k$.
\end{theorem}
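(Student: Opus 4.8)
The plan is to control the total error through the three-term triangle-inequality decomposition recorded in Section~\ref{sec:overview},
\[
 \big|I(G(u)) - Q_{s,N}(G(u_{s,h}))\big|
 \,\le\, |(I-I_s)(G(u))| + |I_s(G(u_s-u_{s,h}))| + \big|I_s(G(u_{s,h})) - Q_{s,N}(G(u_{s,h}))\big|,
\]
and to bound each summand by an already-established estimate. The first (dimension-truncation) term is handled directly by Theorem~\ref{theorem:truncation}, whose bound is proportional to $\frac{C_{\rm func}(kL)}{C_{\rm coer}}\,s^{-2/p_0+1}$; since $C_{\rm func}(kL)=\calO(1+(kL)^{-1})$ and $C_{\rm coer}$ is independent of $k$, this contributes a term of order $(1+(kL)^{-1})\,s^{-2/p_0+1}$. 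The second (FEM) term is bounded by the Nitsche/duality estimate~\eqref{eq:duality-unif} integrated over $\bsy_{\{1:s\}}$ (the measure of $U_s$ being one), giving a contribution proportional to $\frac{C_{\rm cont}(kL)}{C_{\rm coer}}\,h^{p}$; writing $C_{\rm cont}(kL)=\calO(kL+(kL)^{-1})=\calO\big((1+(kL)^{-1})(kL+1)\big)$ this is of order $(1+(kL)^{-1})(kL+1)\,h^p$.

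The third (QMC) term is supplied by the interlaced polynomial lattice analysis developed earlier in this subsection. Taking the integrand to be $F(\bsy)=G(u_{s,h}(\cdot,\bsy))$, I would combine the linearity bound~\eqref{eq:lin-G} with Theorem~\ref{theorem:bound} applied to $u_{s,h}$, this time for \emph{general} multi-indices $\bsnu$ (in contrast to the randomly shifted case, which needed only $\nu_j\le1$), to bound $\|G(u_{s,h})\|_{s,\alpha,\bsgamma}$. With the SPOD weights~\eqref{eq:spod}, the choice $\lambda=p_1$ and interlacing factor $\alpha=\lfloor 1/p_1\rfloor+1$, the arguments of \cite[Pages~2694--2695]{DKLNS14} yield a cubature error of order $N^{-1/p_1}$ whose implied constant is proportional to $\frac{C_{\rm func}(kL)}{C_{\rm coer}}$ and independent of $s$, $h$, $N$; thus this term is of order $(1+(kL)^{-1})\,N^{-1/p_1}$.

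It then remains only to add the three contributions and factor out the common quantity $(1+(kL)^{-1})$, which is present in every term since $C_{\rm func}(kL)/C_{\rm coer}=\calO(1+(kL)^{-1})$ and $C_{\rm cont}(kL)/C_{\rm coer}=\calO\big((1+(kL)^{-1})(kL+1)\big)$. Absorbing the $f$-, $g$- and $G$-dependent factors $\|\sbinom{f}{g}\|_{L^2(D)\times L^2(\partial D)}$ and $\|G\|_{V^*}$, together with the $s$-, $h$-, $N$- and $k$-independent numerical constants, into a single constant $C$ reproduces the asserted inequality.

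The final assembly above is essentially mechanical once the three ingredients are in hand, so I do not expect it to be the obstacle. The genuine difficulty --- already discharged in the preceding text --- is establishing that the QMC constant $C_{s,\alpha,\bsgamma}(\lambda)$ is bounded \emph{uniformly in $s$}. This is where Assumption~(A5) is indispensable: the summability $\sum_{j\ge1}\Upsilon_j^{p_1}<\infty$, with $\Upsilon_j=C_{\rm regu}(kL)\,\|\psi_j\|_{W^{1,\infty}(D)}$ and $C_{\rm regu}(kL)=\calO(kL+(kL)^{-1})$, must be played against the factorial growth $|\bsnu_\setu|!$ inherited from the derivative bound of Theorem~\ref{theorem:bound}, so that the SPOD-weighted sum over subsets $\setu\subseteq\{1:s\}$ converges by a ratio-test argument with exponent $\tau=\lambda=p_1<1$. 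Keeping the powers of $kL$ aligned across all three terms, so that a single factor $(1+(kL)^{-1})$ can be extracted cleanly, is the only remaining point that warrants care.
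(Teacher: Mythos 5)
Your proposal is correct and follows essentially the same route as the paper: the three-term triangle-inequality decomposition of Section~\ref{sec:overview}, with the summands bounded respectively by Theorem~\ref{theorem:truncation}, the duality estimate~\eqref{eq:duality-unif}, and the interlaced polynomial lattice analysis with SPOD weights~\eqref{eq:spod} under Assumption~(A5), followed by extraction of the common factor $1+(kL)^{-1}$. The paper itself states Theorem~\ref{thm:main-hqmc} as a summary of exactly these ingredients, so no further comment is needed.
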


\section*{Acknowledgements}

We sincerely thank the anonymous referees for insightful comments and
suggestions which helped to improve the paper. We gratefully acknowledge
the financial support from the Australian Research Council for the project
DP180101356.

\appendix

\section{Small perturbation approach}\label{append:pert}

In this section, by a partial differential operator (PDO) associated with
a boundary value problem (BVP), governed by a PDE and a boundary condition
(BC), we mean the PDO in its weak sense. The weak PDO (WPDO) is a linear
operator induced by a sesquilinear form associated with an equivalent weak
formulation  of the BVP (WBVP).

The stochastic wave propagation Helmholtz PDE model introduced in
Section~\ref{sec:intro} can be reformulated, using the  celebrated
standard weak form, as
\begin{equation}\label{eq:var_overview}
  \bilins_{\bsy}(u,w) \,=\, \antilins(w)
  \qquad \mbox{for all}\quad \bsy\in U\,,\quad w\in H^1(D),
\end{equation}
where, for  fixed $\bsy$,  $\bilins_{\bsy}: H^1(D) \times H^1(D)
\rightarrow \mathbb{C}$ is a sesquilinear form, and $\antilins$ is a
linear functional. More precisely,
\begin{equation*} 
\bilins_{\bsy}(v,w)  =  a_0(v,w) +  \sum_{j\geq 1} y_j\, a_j(v,w),
\end{equation*}
where  for  $v, w \in H^1(D)$
\begin{equation*} 
a_0(v,w) = \int_{D} \Big[ \nabla v \cdot \overline{\nabla w} - k^2\,n_0\,v\,\overline{w}\Big]
-\ii k \int_{\partial D} \gamma v  \overline{\gamma w}, \qquad
a_j(v,w) =  k^2  \int_{D}\, \psi_j v \, \overline{w}, \qquad j \geq 1.
 \end{equation*}
It is well known that $a_0$ is sign-indefinite (that is, non-coercive).
However, $a_0$ satisfies the inf-sup condition with  inf-sup constant
$\mu_0 = \mathcal{O}(1/k)$, see for example~\cite[Cor. 1.10]{BSW16}.
Indeed, till recently, all known and analyzed variational reformulations
of the heterogeneous media deterministic Helmholtz model are
sign-indefinite, see for example \cite{GanMor20} and references therein.

The inf-sup property of $a_0$ has been established~\cite{BSW16}, using the
following weighted ($k$-dependent) norm in $H^1(D)$~\cite{BSW16}:
 \begin{equation*} 
 \| v \|_{H^1_k}^2 = \|\nabla v \|_{L^2(D)}^2  + k^2\,\| v \|_{L^2(D)}^2.
 \end{equation*}
The framework in~\cite{DKLNS14,DKLS16} is established for a general class
of operators defined on reflexive Banach spaces $X, Y$. For our wave
propagation model, it is appropriate to consider linear operators
 $\Lajs: X \rightarrow Y'$ defined as
 \begin{equation*} 
 {}_Y \hspace{-0.05in}\left<w, \Lajs v \right >_{Y'} = a_j(v,w), \qquad  v \in X, \quad w \in Y,  \qquad j \geq 0,
\end{equation*}
with $X = Y = H^1_k(D)$. Consequently the standard
WBVP~\eqref{eq:var_overview} based WPDO $\mathcal{A}$ of the
BVP~\eqref{eq:pde_omega} with PDO $\La$ in~\eqref{eq:pdeoper_omega} is:
\begin{equation}\label{eq:pde_ysplit}
 \mathcal{A}(\bsy) \,=\,
 \Lazs + \sum_{j\geq 1} y_j \Lajs,   \qquad \bsy \in U.
\end{equation}
Thus, thanks to the inf-sup property of $a_0$, we have $\Lazs \in
\mathcal{L}(X, Y')$ is boundedly invertible with $\|\Lazs^{-1}\| =
\mathcal{O}(k)$ and $\|\Lajs\| = \mathcal{O}(1)$, since
\[
  \|\Lajs\| \,\le\, \sup_{v,w\ne 0} \frac{|a_j(v,w)|}{\|v\|_X\,\|w\|_X}
  \,\le\, \frac{\|\psi_j\|_{L^\infty(D)}\,k^2\, \|v\|_{L^2(D)}\,\|w\|_{L^2(D)}}{\|v\|_X\,\|w\|_X}
  \,\le\, \|\psi_j\|_{L^\infty(D)}.
\]

The class of stochastic WPDOs considered in \cite{DKLNS14,DKLS16} are of
the form in~\eqref{eq:pde_ysplit}. The framework in \cite{DKLNS14,DKLS16}
starts with the summability assumption~\cite[Equation (1.3)]{DKLNS14}
\begin{equation*} 
\sum_{j\geq 1} \|\Lajs\|_{\mathcal{L}(X, Y')}^p < \infty, \qquad \text{for some} \quad p \in (0,1],
\end{equation*}
and bounded invertible assumption of $\Lazs$, as a linear operator from
$X$ to $Y'$.

The analysis in \cite{DKLNS14,DKLS16} and related papers, while of wide
generality, requires that the operator sum in \eqref{eq:pde_ysplit} be
small, in the sense that
\begin{equation*} 
 \Lazs + \sum_{j\geq 1}  y_j \Lajs
 \,=\, \Lazs\Big( \mathcal{I} +\sum_{j\geq 1} y_j \Lazs^{-1} \Lajs \Big)
\end{equation*}
should satisfy, using $\left |y_j \right | \leq 1/2$,
\begin{equation}\label{eq:neu-ass}
\frac{1}{2} \sum_{j\geq 1} \|\Lazs^{-1} \Lajs \| \,<\, 1\,,
\end{equation}
since if this is satisfied then the Neumann series for the inverse of the
operator sum converges in operator norm in the space $X$. Accordingly, it
seems reasonable to say that any argument based on \eqref{eq:neu-ass} is
using the ``small perturbation'' approach. Note that \eqref{eq:neu-ass},
when applied to our wave propagation model, requires that a quantity of
the order $k \sum_{j\geq 1} \left\| \psi_j \right\|_{L^\infty(D)}$ be less
than~$1$.

\section{Technical lemma}

\begin{lemma} \label{lem:recur4}
Given some non-negative real numbers $(\Psi_j)_{j\in\bbN}$ and constants
$c_0,c_1,c_2,B$, let $(\bbA_\bsnu)_{\bsnu\in\indx}$ be non-negative real
numbers satisfying the inequality
\begin{equation*} 
  \bbA_\bsnu
  \,\le\,
  \begin{cases}
  B & \mbox{if } \bsnu = \bszero, \\
  c_0\,\Psi_j\, B & \mbox{if } \bsnu = \bse_j, \vspace{0.2cm} \\
  c_1\,\displaystyle\sum_{j\in\supp(\bsnu)} \nu_j\,\Psi_j\, \bbA_{\bsnu-\bse_j} \\
  + c_2\,\displaystyle\sum_{j\in\supp(\bsnu)}\sum_{\ell\in\supp(\bsnu-\bse_j)}
  \nu_j\,(\bsnu-\bse_j)_\ell\,
  \Psi_j\, \Psi_\ell\, \bbA_{\bsnu-\bse_j-\bse_\ell}
  & \mbox{if } |\bsnu| \ge 2.
  \end{cases}
\end{equation*}
Then for any $\bsnu \in \indx$ we have
\begin{equation*} 
  \bbA_\bsnu
  \,\le\, |\bsnu|!\,\bsUpsilon^\bsnu\,B\;,
  \quad\mbox{with}\quad \bsUpsilon^\bsnu := \prod_{j\ge 1} \Upsilon_j^{\nu_j},
  \qquad \Upsilon_j :=
  \max\left\{c_0, 2c_1,\sqrt{2c_2}\right\} \,\Psi_j
  \;.
\end{equation*}
\end{lemma}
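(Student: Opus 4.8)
The plan is to argue by induction on the order $|\bsnu|$, after setting $M := \max\{c_0, 2c_1, \sqrt{2c_2}\}$ so that $\Upsilon_j = M\,\Psi_j$ for every $j$. The base cases are read off directly from the hypotheses: for $\bsnu=\bszero$ we have $\bbA_\bszero \le B = 0!\,\bsUpsilon^\bszero B$, and for $\bsnu=\bse_j$ we have $\bbA_{\bse_j} \le c_0\,\Psi_j\,B \le M\,\Psi_j\,B = \Upsilon_j\,B = 1!\,\bsUpsilon^{\bse_j}B$, using $c_0 \le M$.

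For the inductive step I fix $\bsnu$ with $n := |\bsnu| \ge 2$ and assume the claimed bound for all multi-indices of strictly smaller order. I bound the two sums in the recurrence separately. For the first sum, inserting the inductive bound $\bbA_{\bsnu-\bse_j} \le (n-1)!\,\bsUpsilon^{\bsnu-\bse_j}B = (n-1)!\,(\bsUpsilon^\bsnu/\Upsilon_j)\,B$ and using $\Psi_j/\Upsilon_j = 1/M$ yields
\[
  c_1\!\!\sum_{j\in\supp(\bsnu)}\!\! \nu_j\,\Psi_j\,\bbA_{\bsnu-\bse_j}
  \,\le\, \frac{c_1}{M}\,(n-1)!\,\bsUpsilon^\bsnu B \!\!\sum_{j\in\supp(\bsnu)}\!\! \nu_j
  \,=\, \frac{c_1}{M}\,n!\,\bsUpsilon^\bsnu B,
\]
where the identity $\sum_{j\in\supp(\bsnu)}\nu_j = |\bsnu| = n$ converts $(n-1)!$ into $n!$. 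Since $M \ge 2c_1$, the prefactor satisfies $c_1/M \le 1/2$.

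For the second (double) sum, inserting $\bbA_{\bsnu-\bse_j-\bse_\ell} \le (n-2)!\,(\bsUpsilon^\bsnu/(\Upsilon_j\Upsilon_\ell))\,B$ and using $\Psi_j\Psi_\ell/(\Upsilon_j\Upsilon_\ell) = 1/M^2$ produces the prefactor $c_2/M^2$ times $(n-2)!\,\bsUpsilon^\bsnu B$ times $\sum_{j\in\supp(\bsnu)}\nu_j\sum_{\ell\in\supp(\bsnu-\bse_j)}(\bsnu-\bse_j)_\ell$. The inner sum equals $|\bsnu-\bse_j| = n-1$, so the double sum collapses to $(n-1)\sum_j\nu_j = n(n-1)$; since $n(n-1)(n-2)! = n!$ this term equals $(c_2/M^2)\,n!\,\bsUpsilon^\bsnu B$, with $c_2/M^2 \le 1/2$ because $M \ge \sqrt{2c_2}$. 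Adding the two contributions gives $\bbA_\bsnu \le (\tfrac12+\tfrac12)\,n!\,\bsUpsilon^\bsnu B = |\bsnu|!\,\bsUpsilon^\bsnu B$, which closes the induction.

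I do not anticipate a genuine obstacle; the whole content is careful combinatorial bookkeeping together with the observation that the constant $M = \max\{c_0, 2c_1, \sqrt{2c_2}\}$ is engineered precisely so that each of the two sums contributes a coefficient of at most $\tfrac12$, making the total at most $1$. The only subtlety worth flagging is the degenerate case in which $\Psi_j = 0$ for some $j\in\supp(\bsnu)$: then $\bsUpsilon^\bsnu = 0$, and the same induction forces $\bbA_\bsnu = 0$, since every surviving term in the recurrence either carries an explicit $\Psi_j$ factor or an $\bbA$ evaluated at a smaller index whose support still contains $j$ and which therefore already vanishes. Hence the bound holds trivially in that case, and the divisions by $\Upsilon_j$ above are only ever performed when $\Psi_j > 0$.
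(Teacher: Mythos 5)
Your proof is correct and follows essentially the same induction as the paper's: the base cases from $c_0\le M$, and the inductive step engineered so that the $c_1$-sum contributes a factor $c_1/M\le\tfrac12$ and the $c_2$-sum a factor $c_2/M^2\le\tfrac12$. The only cosmetic difference is that you evaluate the double sum directly via $\sum_{\ell}(\bsnu-\bse_j)_\ell=n-1$, whereas the paper splits it into diagonal ($\ell=j$) and off-diagonal parts before summing to the same $n(n-1)$; your remark on the degenerate case $\Psi_j=0$ is a harmless extra precaution (the paper avoids the division by writing $\Psi_j\,\bsUpsilon^{\bsnu-\bse_j}=\bsUpsilon^{\bsnu}/C$ as a product).
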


\begin{proof}
Let $\Upsilon_j = C\, \Psi_j$. We prove this result by induction while
determining the multiplying factor~$C$. The cases $|\bsnu|\le 1$ hold
trivially if $c_0\le C$. Suppose that the result holds for all $|\bsnu| <
n$ with some $n\ge 1$. Then for $|\bsnu| = n\ge 2$, we can split the terms
in the inequality into
\begin{equation*}
  \bbA_\bsnu
  \,\le\, c_1\,\sum_{j\ge1} \nu_j\,\Psi_j\, \bbA_{\bsnu-\bse_j}
  + c_2\,\sum_{j\ge1}
  \nu_j\,(\nu_j-1)\, \Psi_j^2\, \bbA_{\bsnu-2\bse_j}
  + c_2\,\sum_{j\ge1}\sum_{\satop{\ell\ge1}{\ell\ne j}}
  \nu_j\,\nu_\ell\, \Psi_j\, \Psi_\ell\, \bbA_{\bsnu-\bse_j-\bse_\ell}.
\end{equation*}
Applying the induction hypothesis then leads to
\begin{align*}
  \bbA_\bsnu
  &\,\le\,
  c_1\,\sum_{j\ge1} \nu_j\,\Psi_j\,
  (|\bsnu|-1)!\, \bsUpsilon^{\bsnu-\bse_j} \,B
  + c_2\,\sum_{j\ge1} \nu_j\,(\nu_j-1)\, \Psi_j^2\,
  (|\bsnu|-2)!\, \bsUpsilon^{\bsnu-2\bse_j} \,B \\
  &\qquad\qquad\qquad\qquad\qquad\qquad\quad
  + c_2\,\sum_{j\ge1} \sum_{\satop{\ell\ge1}{\ell\ne j}}
  \nu_j\,\nu_\ell\, \Psi_j\, \Psi_\ell\,
  (|\bsnu|-2)!\, \bsUpsilon^{\bsnu-\bse_j-\bse_\ell}\, B \\
  &\,\le\,
  \frac{c_1}{C}\,\sum_{j\ge1} \nu_j
  (|\bsnu|-1)!\, \bsUpsilon^{\bsnu} \,B
  + \frac{c_2}{C^2}\,\sum_{j\ge1} \nu_j\,(\nu_j-1)\,
  (|\bsnu|-2)!\, \bsUpsilon^{\bsnu} \,B \\
  &\qquad\qquad\qquad\qquad\qquad\qquad\quad
  + \frac{c_2}{C^2}\,\sum_{j\ge1} \sum_{\satop{\ell\ge1}{\ell\ne j}}
  \nu_j\,\nu_\ell\,
  (|\bsnu|-2)!\, \bsUpsilon^{\bsnu}\, B
  \,=\, \left(\frac{c_1}{C}+\frac{c_2}{C^2}\right)|\bsnu|!\, \bsUpsilon^{\bsnu}\, B.
\end{align*}

If $c_1 \le \frac{C}{2}$ and $c_2 \le \frac{C^2}{2}$, then $\frac{c_1}{C}
+ \frac{c_2}{C^2}\le 1$. So we may choose $C := \max\{c_0,2c_1,
\sqrt{2c_2}\}$ as stated in the lemma.

An alternative bound can be obtained by choosing $C := \max\{c_0,c_3\}$,
with $c_3 := \frac{c_1 + \sqrt{c_1^2+4c_2}}{2}$ which satisfies
$\frac{c_1}{c_3}+\frac{c_2}{c_3^2}=1$.
\end{proof}

\end{document}